\numberwithin{equation}{section}
\newtheorem{theorem}{\bf Theorem}[section]
\newtheorem{definition}[theorem]{\bf Definition}
\newtheorem{corollary}[theorem]{\bf Corollary}
\newtheorem{remark}[theorem]{\bf Remark}
\newtheorem{lemma}[theorem]{\bf Lemma}
\newtheorem{example}[theorem]{\bf Example}
\def\imag{\mathop{\mathrm{Im}}}
\newcommand {\mat}  [1] {\left[\begin{array}{#1}}
\newcommand {\rix}      {\end{array}\right]}
\newcommand{\eproof}{\space
    {\ \vbox{\hrule\hbox{\vrule height1.3ex\hskip0.8ex\vrule}\hrule}}\par}
\def\diag{\mathop{\mathrm{diag}}}
\newcommand{\nrm}[1]{{\left\vert\kern-0.25ex\left\vert\kern-0.25ex\left\vert #1 
    \right\vert\kern-0.25ex\right\vert\kern-0.25ex\right\vert}}
\newcommand{\C}{{\mathbb C}}
\newcommand{\R}{{\mathbb R}}
\begin{document}

\begin{frontmatter}


\title{Structured eigenvalue backward errors for rational matrix polynomials with symmetry structures}

\author{Anshul Prajapati\fnref{iitd}}
\author{Punit Sharma\fnref{iitd}}
\fntext[iitd]{Department of Mathematics, Indian Institute of Technology Delhi, Hauz Khas, 110016, India; \texttt{\{maz198078, punit.sharma\}@maths.iitd.ac.in.}
A.P. acknowledges the support of the CSIR Ph.D. grant by Ministry of Science \& Technology, Government of India. P.S. acknowledges the support of the DST-Inspire Faculty Award (MI01807-G) by Government of India,  India.
}

\begin{abstract}
We derive computable formulas for the structured backward errors of a complex number $\lambda$ when considered as an approximate eigenvalue of rational matrix polynomials that carry a symmetry structure. We consider symmetric, skew-symmetric, T-even, T-odd, Hermitian, skew-Hermitian, $*$-even, $*$-odd, and $*$-palindromic structures.  
Numerical experiments show that the backward errors with respect to structure-preserving and arbitrary perturbations are significantly different.
\end{abstract}

\begin{keyword}
matrix polynomial, rational matrix polynomial, eigenvalue backward error, perturbation theory, nonlinear eigenvalue problem 

{\textbf{ AMS subject classification.}}
15A12, 15A18, 15A22, 65F15, 65F35
\end{keyword}

\end{frontmatter}


\section{Introduction}

In this paper, we study the perturbation analysis of the \emph{rational eigenvalue problem} (REP)
\begin{equation}\label{rep}
    G(z)x=0,
\end{equation}
where $G(z)$ is an $n\times n$ \emph{rational matrix polynomial} (RMP) of the form 
\begin{equation}\label{rmatrix}
    G(z)=\sum_{p=0}^{d}z^pA_p+\sum_{j=1}^k \frac{s_j(z)}{q_j(z)}E_j,
\end{equation}
where $A_p$'s and $E_j$'s are $n\times n$ constant matrices that carry some symmetry structure, and $s_j(z)$ and $q_j(z)$ are scalar polynomials with specific properties. If a pair $(\lambda,x) \in \C \times \C^{n}\setminus \{0\}$ satisfies~\eqref{rep}, then $\lambda$ is called an \emph{eigenvalue} of $G(z)$ and $x$ is called a corresponding \emph{eigenvector}. 
For a given RMP $G(z)$  with a symmetry structure and  $\lambda \in \C$, we are interested in finding the nearest RMP to $R(z)$ with the same symmetry structure of which $\lambda$ is an eigenvalue.

The REPs arise in a wide range of applications such as in acoustic emissions of high-speed trains, loaded elastic string, damped vibration of structures, electronic structure calculations of quantum dots and in control theory see \cite{MR3031626,MR2124762,MR2249422,MR2054343,MR2260626}. The REPs with $G(z)$ in the form~\eqref{rmatrix} are special cases of general nonlinear eigenvalue problems of the form
\[
\mathcal M(z)x=0 \quad \text{with} \quad \mathcal M(z)= \sum_{j=1}^m M_jf_j(z),
\]
where $M_j$'s are $n \times k$ matrices and $f_1(z),\ldots,f_m(z)$ are scalar-valued functions; see~\cite{MR2124762,TisM01} for a survey on a large number of applications and~\cite{BetHMST13} for some benchmark examples. The rational polynomials that occur in applications often satisfy a symmetry structure. 
For examples, 
\begin{example}{\rm \cite{BChoHHL11}} {\rm 
The numerical solution of a fluid-structure interaction leads to a symmetric REP of the form 
\[
\left( \frac{z^2}{a^2} M + K + \frac{z^2}{z \beta + \alpha} D 
\right)x=0,
\]
where $a$ is the speed of sound in the given material, $\alpha$ and $\beta$ are positive constants, $M$ and $K$ are symmetric positive definite matrices, respectively describing the mass and the stiffness, and the matrix $D$ is symmetric positive semidefinite describing the damping effects of an absorbing wall. 
}
\end{example} 

\begin{example}{\rm 
The REP arising in the finite element simulation of mechanical problems, see~\cite{Sol06,Vos03} , have the following form:
\[
\left ( P(z) +Q (z) \sum_{j=1}^m \frac{z}{z-\sigma_j}E_j
\right )x=0,
\]
where $P(z)$ and $Q(z)$ are symmetric matrix polynomials with large and sparse coefficients, and $E_j$'s are low-rank matrices. For a classical example, the REP
\[
\left(A-zB + \sum_{j=1}^k\frac{z}{z-\sigma_j}E_j\right)x=0,
\]
where $\sigma_j$ are positive, $A$ and $B$ are symmetric positive definite matrices of size $n\times n$ and $E_j=C_jC_j^T$ with $C_j \in \mathbb{R}^{n\times r_j}$ are of rank $r_j$, arises in the study of the simulation of mechanical vibration of fluid-solid structures~\cite{MR2249422,MR2124762,MehV06}.
}
\end{example} 

As mentioned in~\cite{MazV06,HwaLW05}, in some cases, the REPs can be transformed into a higher degree polynomial eigenvalue problem by clearing out the denominators, and the polynomial eigenvalue problem can then be linearized into a  linear eigenvalue problem. However, the size of the problem may substantially increase, and extra un-physical eigenvalues are typically introduced. These have to be recognized and removed from the computed spectrum. This makes it essential to study the perturbation analysis of the REPs in its original form without turning it into a polynomial problem. 

Linearization of a rational matrix polynomial $G(z)$ is a very recent concept and some classes of linearizations and strong linearizations are recently developed, see~\cite{MR3477318,MR3878309,MR2811297}. If the RMP carries a symmetry structure, then the linearizations that preserve the symmetry structure are prefered; see~\cite{MR3908736} for structured strong linearizations and~\cite{MR3910502} for strong minimal linearizations. 

Backward errors play an important role in the stability of an algorithm that  computes eigen-elements of a matrix function. Suppose matrix functions with additional symmetry structures are considered, then in that case, structure-preserving algorithms are advisable because, in this way, existing symmetries in the spectrum are preserved even under round-off errors. Although the eigenvalue/eigenpair backward errors have been well studied for matrix polynomials, see~\cite{MR2780396,MR2496422,MR3194659,MR3335496,MR4404572,Tis00}, there is only a little literature that deals with the perturbation analysis of rational or more general nonlinear eigenvalue problems, see~\cite{MR2124762,Ruh73,MR3568297,Ahm19}. In~\cite{BChoHHL11}, Ahmad and Mehrmann, have studied eigenpair backward errors of RMPs with various structures.  However, there is also a need for the backward error formulas for eigenvalues of structured rational matrix polynomials. Indeed if one is interested in computing the eigenvalues but not in the eigenvectors or invariant subspaces, then the corresponding error analysis should consider this. 

Our work is mainly motivated by~\cite{MR3568297} and aims at deriving computable formulas for the eigenvalue backward errors of  structured RMPs with respect to structure-preserving perturbations. For this, we generalize the framework developed in~\cite{MR3194659,MR3335496} for the eigenvalue backward error formulas for matrix polynomials with Hermitian and palindromic structures. The structures we consider on a RMP $G(z)$ are listed in Table~\ref{tab:my_label}. 

\begin{table}[]
    \centering
    \begin{tabular}{|c|c|c|}\hline 
        \textbf{Structure} &  \textbf{structure on $A_p$} & \textbf{structure on $E_j$}\\ 
    \textbf{on $G(z)$}    & \textbf{$p=0,\ldots,d$} & \textbf{$j=1,\ldots,k$} \\ \hline
        symmetric  &  $A_p^T=A_p$ & $E_j^T=E_j$  \\ \hline 
        skew-symmetric & $A_p^T=-A_p$ & $E_j^T=-E_j$ \\ \hline
        
T-even  & $A_p^T=(-1)^pA_p$ &  
\begin{tabular}{c c}
$E_j^T=-E_j$ ~ if $w_j(-z)=-w_j(z)$\\ $E_j^T=E_j$ ~ if $w_j(-z)=w_j(z)$   
\end{tabular}\\ \hline
                      
 T-odd & $A_p^T=(-1)^{p+1}A_p$ & 
 \begin{tabular}{c c}
$E_j^T=-E_j$ ~if $w_j(-z)=w_j(z)$\\ $E_j^T=E_j$ ~ if $w_j(-z)=-w_j(z)$  \end{tabular}\\ \hline

 Hermitian & $A_p^*=A_p$ & $E_j^*=E_j$ ~ if $w_j(z)^*=w_j(\overline{z})$ \\ \hline
skew-Hermitian & $A_p^*=-A_p$ & $E_j^*=-E_j$ ~ if $w_j(z)^*=w_j(\overline{z})$ \\ \hline
$*$-even & $A_p^*=(-1)^pA_p$ & \begin{tabular}{c c}
$E_j^*=E_j$ ~ if $(w_j(-z))^*=w_j(\overline{z})$\\ 
$E_j^*=-E_j$ ~ if $(w_j(-z))^*=-w_j(\overline{z})$  
\end{tabular} \\ \hline 

$*$-odd  & $A_p^*=(-1)^{p+1}A_i$ & \begin{tabular}{c c}
$E_j^*=E_j$ ~ if $(w_j(-z))^*=-w_j(\overline{z})$\\ 
$E_j^*=-E_j$ ~ if $(w_j(-z))^*=w_j(\overline{z})$  
\end{tabular} \\ \hline 

$*$-palindromic & $A_p^*=A_{d-p}$ & $E_j^*=E_j$ \quad if $(w_j(z))^*={\overline z}^d w_j(\frac{1}{\overline{z}})$\\ \hline
T-palindromic & $A_p^T=A_{d-p}$ & $E_j^T=E_j$ \quad if $w_j(z)=z^dw_j(\frac{1}{z})$\\ \hline
    \end{tabular}
    \caption{Structures on G(z)}
    \label{tab:my_label}
\end{table}

This paper is organized as follows: In Section~\ref{sec:prelims}, we introduce definitions and  review some preliminary results that will be used for the main results in the paper. In Section~\ref{sec:sym}, we consider the structured eigenvalue backward error of a complex number $\lambda$ for symmetric and skew-symmetric RMPs. These formulas are extended for T-even and T-odd structures in Section~\ref{sec:eve/odd}. Inspired by~\cite{MR3194659}, in section~\ref{sec:herm}, we derive formulas for the structured eigenvalue backward error of $\lambda \in \C$ for Hermitian and related structures. 
The techniques of~\cite{MR3335496}  are extended for deriving the backward error formulas for RMPs with palindromic structure in Section~\ref{sec:pal}. The eigenvalues of a structured RMP behave differently under structured preserving and arbitrary perturbations to its coefficient matrices; see Section~\ref{sec:numeric}.

\paragraph{Notations}:~ In the following, ${\rm Herm}(n)$, ${\rm SHerm}(n)$,  ${\rm Sym}(n)$, and ${\rm Ssym}(n)$ respectively denote the sets of $n \times n$ Hermitian, skew-Hermitian, symmetric and skew-symmetric matrices. For a Hermitian matrix $H$, $\lambda_{\max}(H)$, $\lambda_{\min}(H)$ and $\lambda_2(H)$ respectively denote the largest, the smallest, and the second-largest eigenvalues of $H$. The smallest and the second largest singular values of  a matrix $A$ are respectively denoted by $\sigma_{\min}(A)$ and $\sigma_2(A)$. By $\|\cdot\|$, we denote the spectral norm of a vector or a matrix. By $I_m$, we denote the identity matrix of size $m \times m$, and $i$ stands for the complex number $\sqrt{-1}$. The  Kronecker  product is represented by $\otimes$ and we refer to~\cite{HorJ85} for the standard properties of the Kronecker product. 

\section{Preliminaries}\label{sec:prelims}

In order to measure perturbations of rational matrix polynomials, we introduce a norm on $\left(\mathbb{C}^{n \times n}\right)^{d+k+1}$.
\begin{definition}
For a tuple of matrices $(\Delta A_{0}, \ldots, \Delta A_{d},\Delta E_1,\ldots, \Delta E_k) \in (\mathbb{C}^{n \times n})^{d+k+1}$, we define 
\[
\nrm{\left(\Delta A_{0}, \ldots, \Delta A_{d},\Delta E_1, \ldots, \Delta E_k\right)}:=\sqrt{\left\|\Delta A_{0}\right\|^{2}+\cdots+\left\|\Delta A_{d}\right\|^{2}+\left\|\Delta E_{1}\right\|^{2}+\cdots+\left\|\Delta E_{k}\right\|^{2}},
\]
where $\|\cdot\|$ stands for the spectral norm.
\end{definition}
Let $G(z)$ be an $n \times n$ RMP of \emph{degree} $d$ of the form~\eqref{rmatrix}, i.e., $G(z)=\sum_{p=0}^d z^pA_p + \sum_{j=1}^k w_j(z)E_j$, where $w_j(z)=\frac{s_j(z)}{q_j(z)}$.  The RMP $G(z)$ is said to be \emph{regular} if $\text{det}(G(z)) \not \equiv 0$; otherwise, it is called \emph{singular}.
The pair $(\lambda,x)$ is referred to as an eigenpair of $G(z)$ if $G(\lambda)x=0$. The roots of $q_i(z)$ are the \emph{poles} of $G(z)$ and  $G(z)$ is not defined on these values. Throughout the paper, we assume that 
\begin{equation}\label{assump}
\text{\textbf{Assumption}}:~\lambda \in \C~ \text{such\,that}~ w_j(\lambda)\neq 0 ~\text{for}\, j=1,\ldots,k~\text{and}~ \lambda~ \text{is not a pole of}~ G(z).
\end{equation}
For the classical perturbation analysis of $G(z)$, one needs to consider perturbations in the coefficient matrices $A_j$'s and $E_j$'s, and also to scalar functions $w_j$'s, i.e., consider the following perturbed rational matrix polynomial 
\begin{equation}
\sum_{p=0}^d z^p(A_p+\Delta A_p) + \sum_{j=1}^k \left(w_j(z)+\delta w_j(z)\right)\left(E_j+\Delta E_j\right),
\end{equation}
and then study how the eigenvalues change under the perturbations. Unfortunately, such perturbation analysis turns out to be extremely difficult. Instead, one may assume in many applications that the perturbation in the scalar functions $w_j(z)$ are either known or can be bounded~\cite{MR3568297,Ahm19}.

 Motivated from the perturbation analysis in~\cite{MR3568297}, we consider the structure-preserving perturbations $\Delta G(z)$ to the RMP $G(z)$ of the form
\begin{equation}\label{eq:assumpert}
\Delta G(z)=\sum_{p=0}^d z^p\Delta A_p+\sum_{j=1}^kw_j(z)\Delta E_j.
\end{equation}
We are now in a position to define backward error for $G(z)$ of a $\lambda \in \C$.
\begin{definition}
    Let $G(\lambda)$ be a RMP of the form~\eqref{rmatrix} and $\lambda \in \mathbb{C}$ satisfying~\eqref{assump}. Further, let $\mathbb{S}\subseteq (\mathbb{C}^{n\times n})^{d+k+1}$. Then,
\begin{eqnarray}\label{def:error}
            \eta^{\mathbb{S}}(G,\lambda):=\inf\Bigg\{&\hspace{-.3cm} \nrm{(\Delta A_{0}, \ldots, \Delta A_{d},\Delta E_1, \ldots, \Delta E_k)} \; :\;(\Delta A_{0}, \ldots, \Delta A_{d},\Delta E_1, \ldots, \Delta E_k)\in \mathbb{S},\nonumber\\
             & {\rm det}\bigg(\sum_{p=0}^d \lambda^p(A_p-\Delta A_p)+\sum_{j=1}^kw_j(\lambda)(E_j-\Delta E_j)\bigg)=0 \Bigg\}
\end{eqnarray}
    is called the \emph{structured eigenvalue backward error} of $\lambda$ with respect to $G$ and $\mathbb{S}$.
\end{definition}
When $\mathbb{S}=(\mathbb{C}^{n\times n})^{d+k+1}$ in~\eqref{def:error}, it is called the \emph{unstructured eigenvalue backward error} and is denoted by $\eta(G,\lambda)$. Clearly, $\eta^{\mathbb{S}}(G,\lambda)=0$ if $G(z)$ is singular or $\lambda$ is an eigenvalue of $G$. Hence, in the following, it is reasonable to assume  that $G(z)$ is regular and that $G(\lambda)$ is non-singular (i.e., $\lambda$ is not an eigenvalue of $G(z)$). We also have that 
    \[
    \eta^{\mathbb{S}}(G,\lambda)\leq \nrm{(A_{0}, \ldots, A_{d},E_1, \ldots, E_k)} < \infty,
    \]
because the perturbation $(A_{0}, \ldots, A_{d},E_1, \ldots, E_k)$ makes the perturbed rational matrix as zero matrix.

As mentioned earlier, we extend the framework suggested in~\cite{MR3194659} and~\cite{MR3335496} to compute the structured eigenvalue backward error formulas for $G(z)$. For this, the following lemma is very crucial that converts the determinant condition in~\eqref{def:error} into some structured mapping conditions. This result is a generalization of~\cite[Lemma 2.4]{MR3194659}.
\begin{lemma}\label{determinant}
    Let $G(\lambda)$ be a RMP of the form~\eqref{rmatrix}, $\lambda \in \mathbb{C}$ such that $M:=G(\lambda)^{-1}$ exists, and let $\Delta G(z)$ be as defined in~\eqref{eq:assumpert}.  Then the following are equivalent.
    \begin{enumerate}
        \item[{\rm (i)}]~${\rm det}(G(\lambda)-\Delta G(\lambda))=0$
        \item[{\rm (ii)}]~there exist vectors $v_{A_p}$ for $p=0,\ldots, d$ and vectors $v_{E_j}$ for $j=1,\ldots, k$ satisfying $v_{\lambda}:=\sum_{p=0}^d\lambda^pv_{A_p}+\sum_{j=1}^kw_j(\lambda)v_{E_j} \neq 0$ such that  $v_{A_p}=\Delta A_pMv_{\lambda}$ for $p=0,\ldots,d$, and $v_{E_j}=\Delta E_jMv_{\lambda}$ for $j=1,\ldots, k$. 
    \end{enumerate}
\end{lemma}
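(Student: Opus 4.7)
The plan is to prove the two implications directly using the invertibility of $M=G(\lambda)^{-1}$, treating the right and left sides of $(G(\lambda)-\Delta G(\lambda))x=0$ as a fixed-point-type identity.

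For the direction (i)$\Rightarrow$(ii), I would start by picking a nonzero $x\in\C^n$ in the kernel of $G(\lambda)-\Delta G(\lambda)$, which exists because the determinant vanishes. Then $G(\lambda)x=\Delta G(\lambda)x$. The natural candidate for $v_\lambda$ is $G(\lambda)x$, so that $x=Mv_\lambda$; note $v_\lambda\ne 0$ because $x\ne 0$ and $G(\lambda)$ is invertible. I would then \emph{define} $v_{A_p}:=\Delta A_p M v_\lambda=\Delta A_p x$ for $p=0,\ldots,d$ and $v_{E_j}:=\Delta E_j M v_\lambda=\Delta E_j x$ for $j=1,\ldots,k$. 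The required sum identity follows immediately:
\[
\sum_{p=0}^d \lambda^p v_{A_p}+\sum_{j=1}^k w_j(\lambda)v_{E_j}=\Delta G(\lambda)x=G(\lambda)x=v_\lambda,
\]
verifying both the mapping conditions and the nontriviality $v_\lambda\ne 0$.

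For the converse (ii)$\Rightarrow$(i), assume the vectors $v_{A_p}$, $v_{E_j}$ and $v_\lambda\ne 0$ are given with the stated mapping conditions, and set $x:=Mv_\lambda$. Since $M$ is invertible, $x\ne 0$. Using the definition of $\Delta G(z)$ in~\eqref{eq:assumpert} at $z=\lambda$, the mapping conditions give
\[
\Delta G(\lambda)x=\sum_{p=0}^d \lambda^p \Delta A_p M v_\lambda+\sum_{j=1}^k w_j(\lambda)\Delta E_j Mv_\lambda=\sum_{p=0}^d \lambda^p v_{A_p}+\sum_{j=1}^k w_j(\lambda)v_{E_j}=v_\lambda,
\]
while $G(\lambda)x=G(\lambda)Mv_\lambda=v_\lambda$. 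Subtracting yields $(G(\lambda)-\Delta G(\lambda))x=0$ with $x\ne 0$, so $\det(G(\lambda)-\Delta G(\lambda))=0$.

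I do not anticipate a real obstacle: the statement is essentially a bookkeeping reformulation of the singularity condition, obtained by introducing $v_\lambda:=G(\lambda)x$ as an auxiliary variable and splitting the action of $\Delta G(\lambda)$ on $x=Mv_\lambda$ into its individual coefficient contributions. The only mildly delicate point is making sure the nontriviality of $v_\lambda$ is tracked on both sides; this is handled automatically through the bijection $x\leftrightarrow v_\lambda=G(\lambda)x$ afforded by the invertibility hypothesis on $G(\lambda)$. This is exactly the feature that will later allow the determinant condition in~\eqref{def:error} to be traded for a structured-mapping problem on the vectors $v_{A_p},v_{E_j}$, which is the form amenable to the framework of~\cite{MR3194659,MR3335496}.
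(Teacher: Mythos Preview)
Your proposal is correct and follows essentially the same argument as the paper's own proof: both directions hinge on setting $x=Mv_\lambda$ (equivalently $v_\lambda=G(\lambda)x$) and reading off the component identities $v_{A_p}=\Delta A_p x$, $v_{E_j}=\Delta E_j x$. Your write-up is, if anything, slightly cleaner in that you introduce $v_\lambda=G(\lambda)x$ explicitly at the outset rather than recovering it after defining the component vectors.
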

\begin{proof}
    Let $\tilde{G}(\lambda):=G(\lambda)-\Delta G(\lambda)=\sum_{p=0}^d \lambda^i(A_p-\Delta A_p)+\sum_{j=1}^kw_j(\lambda)(E_j-\Delta E_j)$.\\
    (i)$\implies$(ii) First suppose that (i) holds, then the determinant condition of (i) implies that $\exists \; x \neq 0$ such that $\tilde{G}(\lambda)x=0$. Define $v_{A_p}=\Delta A_px$ for all $p=0,\ldots,d$ and $v_{E_j}=\Delta E_jx$ for all $j=1,\ldots, k$, then
    \begin{equation}\label{ab}
    	G(\lambda)x=G(\lambda)x-\tilde{G}(\lambda)x=\sum_{p=0}^d\lambda^pv_{A_p}+\sum_{j=1}^kw_j(\lambda)v_{E_j}=v_{\lambda}.
    \end{equation}
  Clearly, $v_{\lambda}\neq 0$ as $G(\lambda)$ is nonsingular. On multiplying~\eqref{ab} from the left with $\Delta A_pM$ and $\Delta E_jM$ we obtain
\begin{equation}
v_{A_p}  =\Delta A_iMv_{\lambda}~ \text{for}~ p=0,\ldots,d \quad \text{and}\quad 
            v_{E_j}  =\Delta E_jMv_{\lambda}~\text{for}~ j=1,\ldots,k.
\end{equation}

(ii)$\implies$(i) Next, suppose that (ii) holds. Then 
    \[
    \left(G(\lambda)-\Delta G(\lambda)\right)Mv_{\lambda}=v_{\lambda}-\Delta G(\lambda)Mv_{\lambda}=v_{\lambda}-v_{\lambda}=0.
    \]
    This implies that ${\rm det}(G(\lambda)-\Delta G(\lambda))=0$, since 
$Mv_{\lambda}\neq 0$.
\end{proof} 
Using Lemma~\ref{determinant} in~\eqref{def:error}, we obtain the following corollary.
\begin{corollary}\label{cor1}
 Let $G(\lambda)$ be a RMP of the form~\eqref{rmatrix} and let $\lambda \in \mathbb{C}$ such that $M:=G(\lambda)^{-1}$ exists.  Suppose that $\mathbb{S}\subseteq (\mathbb{C}^{n\times n})^{d+k+1}$. Then
 \begin{eqnarray}\label{strucerror}
 \eta^{\mathbb{S}}(G,\lambda):& \hspace{-.3cm}=\inf\Big\{  \left\|(\Delta A_{0}, \ldots, \Delta A_{d},\Delta E_1, \ldots, \Delta E_k)\right\| \; 
 :\;(\Delta A_{0}, \ldots, \Delta A_{d},\Delta E_1, \ldots, \Delta E_k)\in \mathbb{S}, \nonumber\\
            &\exists\, v_{A_0}, \ldots,v_{A_d}, v_{E_1},\ldots,v_{E_k} \in \C^n, ~
            \; v_{\lambda}=\sum_{p=0}^d\lambda^pv_{A_p}+\sum_{j=1}^kw_j(\lambda)v_{E_j}\neq 0,\nonumber\\
            & v_{A_p}=\Delta A_p Mv_{\lambda},\,p=0,\ldots,d,\;{\rm and}~\, v_{E_j}=\Delta E_jMv_{\lambda},\,j=1,\ldots,k\Big\}.
 \end{eqnarray}
\end{corollary}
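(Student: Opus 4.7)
The plan is to obtain this corollary as an immediate reformulation of the definition~\eqref{def:error} by invoking Lemma~\ref{determinant}. Unpacking~\eqref{def:error}, we are minimizing $\nrm{(\Delta A_0,\ldots,\Delta A_d,\Delta E_1,\ldots,\Delta E_k)}$ over tuples in $\mathbb{S}$ subject to the scalar constraint $\det\bigl(G(\lambda)-\Delta G(\lambda)\bigr)=0$. The hypothesis of the corollary guarantees that $M=G(\lambda)^{-1}$ exists, which is precisely the hypothesis required by Lemma~\ref{determinant}.

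The body of the proof is then a one-line substitution: Lemma~\ref{determinant} asserts that, for any admissible perturbation tuple, the determinant constraint (i) is equivalent to the existence of vectors $v_{A_0},\ldots,v_{A_d},v_{E_1},\ldots,v_{E_k}\in\mathbb{C}^n$ satisfying
\[
v_{\lambda}:=\sum_{p=0}^{d}\lambda^{p}v_{A_p}+\sum_{j=1}^{k}w_j(\lambda)v_{E_j}\neq 0
\]
together with the structured mapping identities $v_{A_p}=\Delta A_p\,M v_\lambda$ for $p=0,\ldots,d$ and $v_{E_j}=\Delta E_j\,M v_\lambda$ for $j=1,\ldots,k$. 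Substituting (ii) in place of (i) inside~\eqref{def:error}, while leaving the objective $\nrm{\cdot}$ and the ambient constraint $(\Delta A_0,\ldots,\Delta A_d,\Delta E_1,\ldots,\Delta E_k)\in\mathbb{S}$ unchanged, produces exactly the expression~\eqref{strucerror}, because the two formulations describe the same feasible set of perturbation tuples.

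No step here presents a genuine obstacle, since all of the content has already been carried out inside Lemma~\ref{determinant}. The only minor point worth flagging is the logical status of the vectors $v_{A_p}$ and $v_{E_j}$ in~\eqref{strucerror}: they appear as auxiliary existential variables attached to the infimum and are not further constrained beyond the mapping identities and the nonvanishing of $v_\lambda$. This matches the statement of Lemma~\ref{determinant} verbatim, so no additional verification is needed and the corollary follows.
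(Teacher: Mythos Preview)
Your proposal is correct and matches the paper's approach exactly: the paper presents Corollary~\ref{cor1} without a separate proof, simply noting that it is obtained by using Lemma~\ref{determinant} in~\eqref{def:error}, which is precisely the substitution you describe.
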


Corollary~\ref{cor1} shows that $ \eta^{\mathbb{S}}(G,\lambda)$ can be simplified further using the minimal norm solutions to the structured mapping problems, where the structure of the mappings depends on the structure of $G(z)$. We will recall the structured mapping results from the literature whenever needed in the following sections.

The following lemma will be helpful to tackle the eigenvalue backward errors for the polynomials $G(z)$ with T-structures, like, skew-symmetric, T-even, T-odd, and T-palindromic. 

%
\begin{lemma}{\rm \cite[Lemma 2.4]{MR4404572}}\label{lem:B}
Let $F=\Omega_{m}^{T} \Lambda_{m} \otimes M +\Lambda_{m}^{T} \Omega_{m} \otimes M^{T}$, where the row vectors $\Omega_{m}, \Lambda_{m} \in \mathbb{C}^{1 \times m}$ are linearly independent and $M \in \mathbb{C}^{n \times n}$ is nonsingular. Then ${\rm rank}(F)=2n$.
\end{lemma}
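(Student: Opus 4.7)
The natural strategy is to sandwich $\rank(F)$ between $2n$ on both sides. For the upper bound, note that $\Omega_m^T\Lambda_m$ and $\Lambda_m^T\Omega_m$ are outer products of row/column vectors and hence have rank $1$, so each summand $\Omega_m^T\Lambda_m\otimes M$ and $\Lambda_m^T\Omega_m\otimes M^T$ is a Kronecker product of a rank-$1$ matrix with an $n\times n$ matrix, giving rank exactly $1\cdot n=n$. Subadditivity of rank then yields $\rank(F)\le 2n$ immediately.

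For the matching lower bound, the plan is to factor $F$ as a product of a ``tall'' and a ``wide'' matrix of inner dimension $2n$. Using the mixed-product property of the Kronecker product, one checks that
\[
F \;=\; \underbrace{\begin{bmatrix} \Omega_m^T\otimes M & \Lambda_m^T\otimes M^T \end{bmatrix}}_{=:U\,\in\,\C^{mn\times 2n}}\;\underbrace{\begin{bmatrix} \Lambda_m\otimes I_n \\ \Omega_m\otimes I_n \end{bmatrix}}_{=:V\,\in\,\C^{2n\times mn}}.
\]
The key step is then to verify that $U$ has full column rank $2n$ and, equivalently, that $V$ has full row rank $2n$. For $U$: since $M$ is nonsingular, the column space of $\Omega_m^T\otimes M$ equals $\Omega_m^T\otimes\C^n$, a subspace of dimension $n$, and similarly for $\Lambda_m^T\otimes M^T$. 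Linear independence of $\Omega_m$ and $\Lambda_m$ forces these two $n$-dimensional subspaces to intersect trivially, so their sum has dimension $2n$. The argument for $V$ is analogous, transposed.

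Once $U$ has full column rank, it is injective as a linear map $\C^{2n}\to\C^{mn}$, so $\rank(UV)=\rank(V)=2n$. Combined with the upper bound, this gives $\rank(F)=2n$. The only mildly delicate point is the trivial intersection in the argument for $\rank(U)=2n$; this is where the linear independence hypothesis on $\Omega_m,\Lambda_m$ is essential, and where the invertibility of $M$ is used to ensure each column block actually attains full column rank $n$. Everything else is routine manipulation of Kronecker products.
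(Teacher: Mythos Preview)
Your argument is correct. The upper bound via subadditivity of rank and the factorization $F=UV$ with $U=\begin{bmatrix}\Omega_m^T\otimes M & \Lambda_m^T\otimes M^T\end{bmatrix}$, $V=\begin{bmatrix}\Lambda_m\otimes I_n\\ \Omega_m\otimes I_n\end{bmatrix}$ both check out; the mixed-product property gives exactly the two summands of $F$, and the linear independence of $\Omega_m,\Lambda_m$ together with invertibility of $M$ indeed forces $\rank(U)=\rank(V)=2n$, whence $\rank(F)=2n$.

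As for comparison: the paper does not actually prove this lemma. It is quoted verbatim from~\cite[Lemma~2.4]{MR4404572} and used as a black box (to verify the rank condition needed in Theorem~\ref{theorem:optimize1} when treating the T-even, T-odd, and T-palindromic cases). So there is no ``paper's own proof'' to compare against here; your self-contained argument simply supplies what the paper outsources to the reference.
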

%

As it was done in~\cite{MR3194659} for the eigenvalue backward errors of matrix polynomials, our focus will be to first reformulate the backward error $\eta^{\mathbb S}(G,\lambda)$ for various structures on $G(z)$ to either of the following two optimization problems:
\begin{enumerate}
\item[(i)] maximizing Rayleigh quotient of a Hermtian matrix with respect to Hermitian constraints, i.e., a problem of the form 
\begin{equation}\label{opt:herm}
\sup \left\{\frac{v^*Jv}{v^*v} ~:~ v \in \C^{n} \setminus \{0\}, \; v^*H_pv=0,\; p=0,\ldots, r \right\}
\end{equation}
for some $n \times n$ Hermitian matrices $J, H_0, \ldots,H_r$, or,
\item[(ii)]  maximizing Rayleigh quotient of a Hermtian matrix with respect to symmetric constraints, i.e., a problem of the form 
\begin{equation}\label{opt:sym}
m_{js_0\ldots s_r}(J,S_0,\ldots,S_r)=\sup\left\{\frac{v^*Hv}{v^*v} ~:~ v \in \C^{n} \setminus \{0\}, \; v^TS_pv=0, \; p=0,\ldots,r \right\}
\end{equation}
for some $n \times n$ Hermitian matrix $H$ and some $n \times n$ symmetric matrices $S_0,\ldots,S_r$.
\end{enumerate}

The two problems in~\eqref{opt:herm} and~\eqref{opt:sym} of maximizing 
Rayleigh quotient were studied in~\cite{MR3194659} for constraints involving Hermitian matrices and in~\cite{MR4404572} for constraints involving symmetric matrices. We restate these results here in the form that will allow us to use them directly to estimate eigenvalue backward error $\eta^{\mathbb S}(G, \lambda)$ for various structures. 

\begin{theorem}{\rm \cite{MR3194659}}\label{theorem:optimize2}
	Let $J, H_{0}, \ldots, H_{r} \in \mathbb{C}^{n \times n}$ be Hermitian matrices. Assume that any nonzero linear combination $\alpha_{0} H_{0}+\cdots+\alpha_{r} H_{r},\left(\alpha_{0}, \ldots, \alpha_{r}\right) \in \mathbb{R}^{r+1} \setminus\{0\}$ is indefinite (i.e., strictly not semidefinite). Then the following statements hold:
\begin{enumerate}
\item[(i)] The function $L: \mathbb{R}^{r+1} \rightarrow \mathbb{R},\left(t_{0}, \ldots, t_{r}\right) \rightarrow\lambda_{\max }\left(J+t_{0} H_{0}+\cdots+t_{r} H_{r}\right)$ is convex and has a global minimum
\[
	\hat \lambda_{\max }:=\min _{t_{0}, \ldots, t_{r} \in \mathbb{R}} L\left(t_{0}, \ldots, t_{r}\right).
\]
Further, we have 
\begin{equation}\label{eq:hermineqres}
\sup \left\{\frac{u^{*} J u}{u^{*} u} ~:~ u \in \mathbb{C}^{n} \backslash\{0\}, u^{*} H_{j} u=0, j=0, \ldots, r\right\}\leq \hat\lambda_{\max }.
\end{equation}
\item[(ii)] If the minimum $\hat \lambda_{\max }$ of $L$ 
is attained at $\left(\hat t_{0}, \ldots,\hat t_{r}\right) \in \mathbb{R}^{r+1}$ and is a simple 
 eigenvalue of $\hat H:=J+\hat t_{0}H_{0}+\cdots+\hat t_{r} H_{r}$, then there exists an eigenvector $u \in \mathbb{C}^{n} \backslash\{0\}$ of $\hat H$ associated with $\hat \lambda_{\max }$ satisfying
\[
	u^{*} H_{j} u=0 \quad {\rm for }~ j=0, \ldots, r.
\]
In this case, equality holds in~\eqref{eq:hermineqres}. 
\end{enumerate}
\end{theorem}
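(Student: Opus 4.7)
The plan is to prove part (i) by combining a standard convexity argument with a coercivity argument driven by the indefiniteness hypothesis, and to prove part (ii) by invoking first-order optimality at the minimizer together with the perturbation formula for simple eigenvalues of Hermitian matrices.

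For part (i), convexity of $L$ follows from the variational characterization $\lambda_{\max}(X) = \sup_{\|u\|=1} u^*Xu$, which exhibits $\lambda_{\max}$ as a supremum of linear functionals on Hermitian matrices; composing with the affine map $(t_0,\ldots,t_r)\mapsto J+\sum_p t_p H_p$ preserves convexity, so $L$ is convex on $\mathbb{R}^{r+1}$. The inequality~\eqref{eq:hermineqres} is then a Lagrangian-style bound: for any admissible $u$ (satisfying $u^*H_p u=0$ for all $p$) and any $(t_0,\ldots,t_r)$,
\[
\frac{u^*Ju}{u^*u} \;=\; \frac{u^*(J+\sum_p t_p H_p)u}{u^*u} \;\le\; \lambda_{\max}\!\Big(J+\sum_p t_p H_p\Big) \;=\; L(t_0,\ldots,t_r),
\]
so taking the infimum over $t$ bounds the supremum by $\inf_t L$, and once attainment is established this infimum equals $\hat\lambda_{\max}$. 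To get attainment I would show $L$ is coercive. On the unit sphere $S^r\subset\mathbb{R}^{r+1}$, the continuous map $\alpha\mapsto\lambda_{\max}\!\bigl(\sum_p\alpha_pH_p\bigr)$ is strictly positive (a zero or negative-semidefinite combination is ruled out by the indefiniteness hypothesis), hence attains a positive minimum $c>0$ by compactness. Since $\lambda_{\max}(A+B)\ge \lambda_{\max}(A)-\|B\|$, writing $t=s\alpha$ with $s=\|t\|$ and $\alpha\in S^r$ gives $L(t)\ge sc-\|J\|\to\infty$ as $\|t\|\to\infty$, yielding coercivity and hence existence of a global minimum of the continuous convex function $L$.

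For part (ii), assume $\hat\lambda_{\max}$ is a simple eigenvalue of $\hat H:=J+\sum_p \hat t_p H_p$. Analytic perturbation theory for simple eigenvalues of Hermitian matrices (Rellich / Kato) gives differentiability of $\lambda_{\max}$ in a neighbourhood of $\hat H$, with directional derivative $u^*(\cdot)u$ for a unit eigenvector $u$ associated with $\hat\lambda_{\max}$. The chain rule yields
\[
\frac{\partial L}{\partial t_p}(\hat t_0,\ldots,\hat t_r) \;=\; u^*H_p u, \qquad p=0,\ldots,r.
\]
Since $(\hat t_0,\ldots,\hat t_r)$ is an interior minimum, these partial derivatives vanish, so this $u$ satisfies $u^*H_pu=0$ for all $p$ and is therefore admissible in the supremum~\eqref{eq:hermineqres}. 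Then $u^*Ju=u^*\hat H u=\hat\lambda_{\max}$, and together with the upper bound from (i) this forces equality.

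The main obstacle I anticipate is the coercivity step: pointwise positivity of $\lambda_{\max}\!\bigl(\sum_p\alpha_pH_p\bigr)$ for each fixed $\alpha\neq 0$ is immediate from indefiniteness, but what is really needed is a \emph{uniform} positive lower bound across all directions in $\mathbb{R}^{r+1}$. Compactness of $S^r$ combined with continuity of $\alpha\mapsto\lambda_{\max}(\sum_p\alpha_pH_p)$ supplies this uniformity. Everything else, including the gradient formula used in (ii), is a standard consequence of simplicity of the top eigenvalue of a Hermitian matrix.
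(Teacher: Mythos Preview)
Your proof is correct and follows the standard route for this type of result: convexity via the variational characterization of $\lambda_{\max}$, coercivity from the uniform positivity of $\lambda_{\max}\bigl(\sum_p\alpha_pH_p\bigr)$ on the unit sphere (which the indefiniteness hypothesis supplies), the elementary Lagrangian bound for~\eqref{eq:hermineqres}, and then the simple-eigenvalue perturbation formula for the gradient at the minimizer in part~(ii). There is nothing to compare against in this paper itself, since Theorem~\ref{theorem:optimize2} is quoted from~\cite{MR3194659} without proof; your argument is essentially the one given there.
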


Next, we state a result from~\cite{MR4404572} to estimate~\eqref{opt:sym}. 
\begin{theorem}{\rm \cite{MR4404572}}\label{theorem:optimize1}
	Let $J \in {\rm Herm}(n)$ and $S_0, \ldots,S_r\in {\rm Sym}(n)$,
	let $\psi: \R^{2r+1}\mapsto \R$ be defined by $(t_0,\ldots,t_{2r})\mapsto \lambda_2(F(t_0,\ldots,t_{2r}))$, where 
\begin{equation}\label{eq:defF}
F(t_0,\ldots,t_{2r})=\begin{bmatrix}
J & \overline{f(t_0,\ldots,t_{2r})}\\
f(t_0,\ldots,t_{2r}) & \overline{J}
\end{bmatrix},
\end{equation}
where $f(t_0,\ldots,t_{2r}):=(t_0+it_1)S_0+\cdots+(t_{2r-2}+it_{2r-1})S_{r-1}+t_{2r}S_r$.
	Then,
	\begin{equation}\label{eq:summary1}
        	m_{hs_0\ldots s_r}(J,S_0,\ldots,S_r)  \leq \inf_{t_0,\ldots,t_{2r}\in\R}\psi(t_0,\ldots,t_{2r}).
	\end{equation}
	Moreover, the following statements hold.
	\begin{enumerate}
    	\item If ${\rm rank}(f(t_0,\ldots,t_{2r}))\geq 2$ for all $(t_0,\ldots,t_{2r}))\in \R^{2r+1} \setminus \{0\}$, then infimum in~\eqref{eq:summary1} is attained in the region $t_0^2+\cdots + t_{2r}^2\leq \beta^2$, where
   \[
   \beta=\frac{\lambda_{\max}(J)-\lambda_{\min}(J)}{c}\quad  \text{and} \quad c=\min\big\{\sigma_2(f(t_0,\ldots,t_{2r})) :~  t_0^2+\cdots + t_{2r}^2=1\big\}.
   \]
%
\item Suppose that the infimum in~\eqref{eq:summary1} is attained at $(\hat t_0,\ldots,\hat t_{2r})$.  If $\psi(\hat t_0,\ldots,\hat t_{2r})$ is a simple eigenvalue of $F(\hat t_0,\ldots,\hat t_{2r})$, then equality holds in~\eqref{eq:summary1}.
	\end{enumerate}
\end{theorem}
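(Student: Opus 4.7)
The plan is to reduce the symmetric-constrained Rayleigh-quotient problem for $J$ to an unconstrained eigenvalue problem for the Hermitian dilation $F(t)$, via a \emph{doubling construction}, and then to extract sharpness from first-order optimality.

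\smallskip

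\noindent\textbf{Step 1 (basic inequality).} Fix any admissible $v\in\C^n\setminus\{0\}$ with $v^T S_p v=0$ for $p=0,\ldots,r$. Since $f(t)$ is a $\C$-linear combination of the symmetric matrices $S_p$, we have $v^T f(t)v=0$ identically in $t$. Set
\[
w_1=\begin{bmatrix} v \\ \bar v\end{bmatrix},\qquad w_2=\begin{bmatrix} iv \\ -i\bar v\end{bmatrix}.
\]
A direct block-computation shows that for arbitrary $p,q\in\C$,
\[
\left[\,pv;\;q\bar v\,\right]^*\,F(t)\,\begin{bmatrix} pv \\ q\bar v\end{bmatrix}=(|p|^2+|q|^2)\,v^*Jv,\qquad \left\|\begin{bmatrix} pv \\ q\bar v\end{bmatrix}\right\|^2=(|p|^2+|q|^2)\,v^*v,
\]
so the Rayleigh quotient is \emph{constant}, equal to $v^*Jv/v^*v$, on the complex $2$-dimensional subspace $V:=\mathrm{span}_\C\{w_1,w_2\}$. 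The Courant--Fischer min--max characterization of $\lambda_2$ then yields $\lambda_2(F(t))\geq v^*Jv/v^*v$ for every $t$, and~\eqref{eq:summary1} follows by taking supremum in $v$ and infimum in $t$.

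\smallskip

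\noindent\textbf{Step 2 (boundedness of the minimizer).} Decompose $F(t)=F(0)+E(t)$ where $E(t)=\begin{bmatrix} 0 & \overline{f(t)} \\ f(t) & 0\end{bmatrix}$ has eigenvalues $\pm\sigma_i(f(t))$, so $\lambda_2(E(t))=\sigma_2(f(t))$. Weyl's inequality gives $\lambda_2(F(t))\geq \sigma_2(f(t))+\lambda_{\min}(J)$. The rank hypothesis together with compactness on the unit sphere in $\R^{2r+1}$ yields $c>0$ as defined, and linearity of $f$ in $t$ gives $\sigma_2(f(t))\geq c\|t\|$. Hence $\lambda_2(F(t))>\lambda_{\max}(J)=\psi(0)$ whenever $\|t\|>\beta$, confining the infimum to the stated ball.

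\smallskip

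\noindent\textbf{Step 3 (equality at a simple minimizer).} At an interior minimizer $\hat t$ with $\hat\lambda_2:=\psi(\hat t)$ a simple eigenvalue of $F(\hat t)$, the Hellmann--Feynman formula gives, for an associated normalized eigenvector $w$, the stationarity conditions $w^*\partial_{t_i}F(\hat t)w=0$ for $i=0,\ldots,2r$. Because $F(\hat t)$ commutes with the antilinear involution $\sigma:\begin{bmatrix}a\\b\end{bmatrix}\mapsto\begin{bmatrix}\bar b\\ \bar a\end{bmatrix}$ and $\hat\lambda_2$ is simple, one can rescale $w$ by a unit phase so that $\sigma w=w$, i.e., $w=\begin{bmatrix}v\\ \bar v\end{bmatrix}$. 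Using $\partial f/\partial t_{2j}=S_j$ and $\partial f/\partial t_{2j+1}=iS_j$ for $j<r$ and $\partial f/\partial t_{2r}=S_r$, the stationarity conditions translate into $v^T S_j v=0$ for $j=0,\ldots,r-1$ and $\mathrm{Re}(v^T S_r v)=0$. Taking the inner product of the eigenvalue equation $F(\hat t)w=\hat\lambda_2 w$ with $w$ and using $\hat\lambda_2\in\R$ forces $v^T f(\hat t)v\in\R$, which, combined with the conditions already derived, forces $v^T S_r v=0$ as well. The eigenvalue equation then reduces to $v^*Jv=\hat\lambda_2\, v^*v$, matching the upper bound from Step~1.

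\smallskip

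The main obstacle is Step~3: the parameter count $2r+1$ falls one short of the $2(r+1)$ real scalar equations defining $v^T S_p v=0$ for $p=0,\ldots,r$. Closing this gap requires exploiting both the antilinear $\sigma$-symmetry (to normalize $w$ into the doubled form) and the reality constraint on $v^T f(\hat t) v$ coming from the Hermitian eigenvalue equation, so that the missing real condition on $\mathrm{Im}(v^T S_r v)$ is recovered from the residual algebraic structure rather than directly from a stationarity condition.
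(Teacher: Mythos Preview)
The paper does not prove this theorem; it is quoted from \cite{MR4404572} and used as a black box, so there is no in-paper argument to compare against. Your Steps~1 and~2 are correct and are exactly the standard doubling/Weyl argument one expects for this result.

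Step~3, however, has a genuine gap in the case $\hat t_{2r}=0$. Your chain of implications yields, from stationarity, $v^TS_jv=0$ for $j<r$ and $\mathrm{Re}(v^TS_rv)=0$; and, from the eigenvalue equation, $v^Tf(\hat t)v\in\R$. But $v^Tf(\hat t)v=\hat t_{2r}\,v^TS_rv$, so reality only forces $\hat t_{2r}\cdot\mathrm{Im}(v^TS_rv)=0$. When $\hat t_{2r}\neq 0$ you are done; when $\hat t_{2r}=0$ you have not shown $v^TS_rv=0$, and your candidate $v$ may fail to lie in the feasible set of $m_{js_0\ldots s_r}$, so the equality does not follow from what you wrote. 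Your closing paragraph correctly identifies that the parameter count is one short, but the ``reality constraint'' you invoke recovers the missing condition only when $\hat t_{2r}\neq 0$. To close the gap you would need an additional argument in the degenerate case $\hat t_{2r}=0$ (for instance, exploiting the extra phase symmetry $F(\hat t)\mapsto V_\theta F(\hat t)V_\theta^{*}$ with $V_\theta=\mathrm{diag}(e^{i\theta}I,e^{-i\theta}I)$, which is available precisely when $\hat t_{2r}=0$, or a second-order/limiting analysis); the cited source handles this via a separate lemma for extracting a feasible vector.
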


We close this section by stating a result that gives an explicit formula for the unstructured eigenvalue backward error $\eta(G, \lambda)$, proof of which is similar to~\cite[Theorem 4.1]{MR3194659} and hence given in~\ref{app:sec1} for future reference. 

\begin{theorem}\label{thm:unstrbacerrfor}
Let $G(\lambda)$ be a RMP of the form~\eqref{rmatrix} and let $\lambda \in \mathbb{C}$ satisfying~\eqref{assump}. Then, 
\begin{equation}\label{error:unstruct}
\eta(G,\lambda)= \frac{\sigma_{\min} (G(\lambda))}{\|(1,\lambda,\ldots,\lambda^d,w_1(\lambda),\ldots,w_k(\lambda))\|},
\end{equation}
where $\sigma_{\min}(A)$ stands for the smallest singular value of a matrix   $A$.
\end{theorem}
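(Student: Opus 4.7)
The plan is to prove~\eqref{error:unstruct} via matching lower and upper bounds, using the reformulation in Corollary~\ref{cor1} with the unstructured set $\mathbb{S}=(\C^{n\times n})^{d+k+1}$. Throughout, write $\omega:=\|(1,\lambda,\ldots,\lambda^d,w_1(\lambda),\ldots,w_k(\lambda))\|$ and $\sigma:=\sigma_{\min}(G(\lambda))$; under~\eqref{assump}, $G(\lambda)$ is invertible, so $\sigma>0$ and $M:=G(\lambda)^{-1}$ is well defined.

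For the lower bound, I would take any admissible tuple $(\Delta A_0,\ldots,\Delta A_d,\Delta E_1,\ldots,\Delta E_k)$ together with the vectors $v_{A_p},v_{E_j}$ and $v_\lambda\neq 0$ supplied by Corollary~\ref{cor1}. The mapping conditions $v_{A_p}=\Delta A_pMv_\lambda$ and $v_{E_j}=\Delta E_jMv_\lambda$ give $\|v_{A_p}\|\le\|\Delta A_p\|\|Mv_\lambda\|$ and similarly for $v_{E_j}$. Applying the triangle inequality and then Cauchy--Schwarz in the expansion $v_\lambda=\sum_p\lambda^pv_{A_p}+\sum_jw_j(\lambda)v_{E_j}$ yields
\[
\|v_\lambda\|\leq \sum_{p=0}^{d}|\lambda|^{p}\|v_{A_p}\|+\sum_{j=1}^{k}|w_j(\lambda)|\|v_{E_j}\|\leq \omega\,\nrm{(\Delta A_0,\ldots,\Delta E_k)}\,\|Mv_\lambda\|.
\]
Since $Mv_\lambda=G(\lambda)^{-1}v_\lambda$ implies $\|Mv_\lambda\|\le\|v_\lambda\|/\sigma$, dividing through by $\|v_\lambda\|>0$ gives $\nrm{(\Delta A_0,\ldots,\Delta E_k)}\ge\sigma/\omega$, and hence $\eta(G,\lambda)\ge\sigma/\omega$.

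For the matching upper bound I would exhibit an explicit minimizer. Let $u,v\in\C^n$ be unit vectors forming a singular vector pair for $\sigma_{\min}(G(\lambda))$, so that $G(\lambda)v=\sigma u$. Define
\[
\Delta A_p:=\frac{\overline{\lambda^p}\,\sigma}{\omega^2}\,uv^*\quad(p=0,\ldots,d),\qquad \Delta E_j:=\frac{\overline{w_j(\lambda)}\,\sigma}{\omega^2}\,uv^*\quad(j=1,\ldots,k).
\]
A direct computation using $v^*v=1$ and $\sum_{p=0}^d|\lambda|^{2p}+\sum_{j=1}^k|w_j(\lambda)|^2=\omega^2$ gives $\Delta G(\lambda)v=\sigma u=G(\lambda)v$, so $(G(\lambda)-\Delta G(\lambda))v=0$ and the determinant condition of~\eqref{def:error} is satisfied. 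Each $\Delta A_p$ and $\Delta E_j$ is rank-one with spectral norm $|\lambda|^p\sigma/\omega^2$ and $|w_j(\lambda)|\sigma/\omega^2$ respectively; summing squares reproduces $\nrm{(\Delta A_0,\ldots,\Delta E_k)}=\sigma/\omega$, which combined with the lower bound proves~\eqref{error:unstruct}.

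There is no substantive obstacle here; this is essentially the classical SVD-based optimum transferred to the rational setting by carrying the weight vector $(1,\lambda,\ldots,\lambda^d,w_1(\lambda),\ldots,w_k(\lambda))$ through the estimates. The only care required is to pair the scalar coefficients $\lambda^p$ and $w_j(\lambda)$ correctly with the block norms via Cauchy--Schwarz and to choose the weights $\overline{\lambda^p}/\omega^2$ and $\overline{w_j(\lambda)}/\omega^2$ in the construction so that equality holds in that pairing.
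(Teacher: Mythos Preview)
Your argument is correct and follows essentially the same strategy as the paper's proof: a Cauchy--Schwarz lower bound paired with an explicit rank-one construction achieving equality. The paper routes the lower bound through the eigenpair backward error $\eta(G,\lambda,x)=\|G(\lambda)x\|/(\omega\|x\|)$ and then minimises over $x$, whereas you invoke Corollary~\ref{cor1} and work with the vectors $v_{A_p},v_{E_j}$; the two are equivalent, though the paper's version is slightly more direct since the unstructured case does not need the mapping reformulation. One small slip: assumption~\eqref{assump} does \emph{not} by itself guarantee that $G(\lambda)$ is invertible (it only excludes poles and zeros of the $w_j$), so your use of $M=G(\lambda)^{-1}$ and of Corollary~\ref{cor1} needs the extra hypothesis that $\lambda$ is not already an eigenvalue; the singular case $\sigma_{\min}(G(\lambda))=0$ should be disposed of separately as the trivial identity $\eta(G,\lambda)=0$.
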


\section{Symmetric and skew-symmetric structures} \label{sec:sym}

In this section, we consider  symmetric or skew-symmetric RMPs $G(z)$ in the form~\eqref{rmatrix} which satisfy $(G(z))^T=G(z)$ or $(G(z))^T=-G(z)$, respectively. Consequently, the coefficient matrices $A_i$'s and $E_i$'s are all symmetric when $G(z)$ is symmetric and skew-symmetric when $G(z)$ is skew-symmetric, see Table~\ref{tab:my_label}. The corresponding structured eigenvalue backward error is denoted by $\eta^{{\rm sym}}(G,\lambda)$ when $\mathbb S ={\rm Sym(n)}^{d+k+1}$ and $\eta^{{\rm ssym}}(G,\lambda)$ when $\mathbb S ={\rm Ssym(n)}^{d+k+1}$ in~\eqref{def:error}.

Suppose $G(z)$ is a skew-symmetric RMP of the form~\eqref{rmatrix}   and let $\lambda \in \mathbb{C}$ satisfy~\eqref{assump}. Suppose that $G(\lambda)$ is nonsingular.  Then from Corollary~\ref{cor1}, computing the backward error $\eta^{{\rm ssym}}(G,\lambda)$ involves solving skew-symmetric mapping problems. In fact, for each $p=0,\ldots,d$ and for each $j=1,\ldots,k$, we need to find skew-symmetric matrices $\Delta A_p$ and $\Delta E_j$ such that 
\begin{equation}\label{eq:skew-symconds}
\Delta A_pMv_{\lambda}=v_{A_p} \quad \text{and} \quad \Delta E_jMv_{\lambda} =v_{E_j}.
\end{equation}
In view of~\cite[Theorem 2.2.2]{Adh08}, for given vectors $Mv_{\lambda}$, $v_{A_p} \in \C^n$ and $Mv_{\lambda}$, $v_{E_j} \in \C^n$, there exist skew-symmetric matrices $\Delta A_p$ and $\Delta E_j$ such that~\eqref{eq:skew-symconds} holds if and only if $(Mv_{\lambda})^Tv_{A_p}=0$ and $(Mv_{\lambda})^Tv_{E_j}=0$. Further, the minimum spectral norm of such mappings satisfy
\[
\|\Delta A_p\|=\frac{\|v_{A_p}\|}{\|Mv_{\lambda}\|} \quad \text{and}\quad \|\Delta E_j\|=\frac{\|v_{E_j}\|} 
  {\|Mv_{\lambda}\|}.
\]
Thus using the minimal norm skew-symmetric mappings in Corolllary~\ref{cor1}, the norm of the perturbation $(\Delta A_{0}, \ldots, \Delta A_{d},\Delta E_1, \ldots, \Delta E_k)\in {\rm Ssym(n)}^{d+k+1}$ satisfies 
%
%
\begin{eqnarray}\label{norm}
		\nrm{(\Delta A_{0}, \ldots, \Delta A_{d},\Delta E_1, \ldots, \Delta E_k)}^2 & =&\frac{\|v_{A_0}\|^2}{\|Mv_{\lambda}\|^2}+\cdots+\frac{\|v_{A_d}\|^2}{\|Mv_{\lambda}\|^2}+\frac{\|v_{E_1}\|^2}{\|Mv_{\lambda}\|^2}+\cdots + \frac{\|v_{E_k}\|^2}{\|Mv_{\lambda}\|^2} \nonumber\\
		& =& \frac{\|v_{A_0}\|^2+\cdots+\|v_{A_d}\|^2+\|v_{E_1}\|^2+\cdots+\|v_{E_k}\|^2}{\|Mv_{\lambda}\|^2} \nonumber\\
		& =& \frac{u^*u}{u^*Ju},
\end{eqnarray}

which is obtained by taking
\begin{equation}\label{skew:matJ}
u = \begin{bmatrix}
v_{A_0}^T & \cdots & v_{A_d}^T & v_{E_{1}}^T & \cdots & v_{E_{k}}^T\end{bmatrix}^T, \quad \text{and}\quad
J = \Lambda\Lambda^* \otimes M^*M,
\end{equation}
where $\Lambda:=[1,\lambda,\ldots,\lambda^d, w_1(\lambda),\ldots,w_k(\lambda)]^T$.

The conditions $(Mv_{\lambda})^Tv_{A_p}=0$ and $(Mv_{\lambda})^Tv_{E_j}=0$ for each $p=0,\ldots,d$ and $j=1,\ldots,k$ can be reduced to $d+k+1$ conditions $u^TC_{p}u=0$ with 
\begin{equation}\label{eq:skewreformCi}
	C_{p}=\Lambda^Te_{p+1}^T \otimes M^T, \quad p=0,\ldots,d+k,
\end{equation}
and it is easy to verify that 
\begin{equation}\label{eq:skewreformvlam}
v_\lambda \neq 0 \quad \Leftrightarrow \quad u^*Ju \neq 0,
\end{equation}
since $M$ is nonsingular.
Further note that if we define $S_p=C_{p}+C_{p}^T$, then for each $p=0,\ldots,d+k$, 
\begin{eqnarray}\label{eq:skewreformSi}
    u^TCu=0 \quad \Leftrightarrow \quad u^T(C^T+C)u=0\quad 
     \Leftrightarrow \quad u^TS_pu=0.
\end{eqnarray}
Using the above observation in Corollary~\ref{cor1}, we get
 \begin{eqnarray}
 (\eta^{{\rm ssym}}(G,\lambda))^2& =&\inf\bigg\{  \sum_{p=0}^d\frac{\|v_{A_p}\|^2}{\|Mv_{\lambda}\|^2}+\sum_{j=1}^k\frac{\|v_{E_j}\|^2}{\|Mv_{\lambda}\|^2} \; 
 :\;\exists\, v_{A_0}, \ldots,v_{A_d}, v_{E_1},\ldots,v_{E_k} \in \C^n \nonumber\\
            &&
            \; ~v_{\lambda}\neq 0,~ (Mv_{\lambda})^Tv_{A_p}=0,\,p=0,\ldots,d,\;\text{and}~\, (Mv_{\lambda})^Tv_{E_j}=0,\,j=1,\ldots,k\bigg\} \nonumber \\
            & =&\inf\left\{\frac{u^*u}{u^*Ju}\;: \; u \in \C^{(d+k+1)n},~ u^*Ju \neq 0, u^TS_{p}u=0 \quad p=0,\ldots,d+k\right\} \label{eq:skewreform1} \\
                        & =&\inf\left\{\frac{u^*u}{u^*Ju}\;: \; u \in \C^{(d+k+1)n}\setminus\{0\},~ u^TS_{p}u=0 \quad p=0,\ldots,d+k\right\} \label{eq:skewreform2} \\
& = & (m_{js_0\ldots s_{d+k}}(J,S_0,S_1,\ldots, S_{d+k}))^{-1}, \label{eq:skewreform3}
 \end{eqnarray}
 where the equality in~\eqref{eq:skewreform1} follows due to~\eqref{norm}, ~\eqref{eq:skewreformSi}, and~\eqref{eq:skewreformvlam}. The equality in~\eqref{eq:skewreform2}  follows since the infimum in~\eqref{eq:skewreform1} is not attained at the vectors $u$ for which $u^*Ju=0$ since $\eta^{{\rm ssym}}(G,\lambda)$  is finite. Finally~\eqref{eq:skewreform3} follow due to~\eqref{opt:sym}.
 
 Thus a direct application of Theorem~\ref{theorem:optimize1} in~\eqref{eq:skewreform3} yields the result that computes a lower bound for the structured eigenvalue backward error $\eta^{{\rm ssym}}(G,\lambda)$.

\begin{theorem}\label{thm:formulaSkew}
Let $G(z)$ be a skew-symmetric RMP of the form~\eqref{rmatrix}  and let $\lambda \in \mathbb{C}$ satisfying~\eqref{assump}. Suppose that
 $M:=(G(\lambda))^{-1}$ exists and for $(t_0,\ldots,t_{2(d+k)}) \in \R^{2(d+k)+1}$ define $f(t_0,\ldots,t_{2(d+k)}):=(t_0+it_1)S_0+(t_2+it_3)S_1+\cdots+t_{2(d+k)}S_{d+k}$ for matrices $S_0,\ldots,S_{d+k}$ given by~\eqref{eq:skewreformSi}. Then
\[
\eta^{{\rm ssym}}(G,\lambda) \geq \left(\inf_{(t_0,\ldots,t_{2(d+k)})\in \R^{2(d+k)+1}} \lambda_2\left(\begin{bmatrix}
J & \overline{f(t_0,\ldots,t_{2(d+k)})}\\
f(t_0,\ldots,t_{2(d+k)}) & \overline{J}
\end{bmatrix}\right)\right)^{-\frac{1}{2}}.
\]
where $ J$ is defined by~\eqref{skew:matJ}.
\end{theorem}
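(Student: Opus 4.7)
The plan is to reduce the computation of $\eta^{{\rm ssym}}(G,\lambda)$ to a constrained Rayleigh-quotient problem over symmetric constraints and then invoke Theorem~\ref{theorem:optimize1}. First I would start from Corollary~\ref{cor1}, which recasts the backward error as an infimum over tuples $(\Delta A_p, \Delta E_j) \in {\rm Ssym}(n)^{d+k+1}$ satisfying the mapping conditions $\Delta A_p M v_\lambda = v_{A_p}$ and $\Delta E_j M v_\lambda = v_{E_j}$. Since the perturbations must be skew-symmetric, I would invoke the classical skew-symmetric mapping theorem: such $\Delta A_p$ and $\Delta E_j$ exist if and only if $(Mv_\lambda)^T v_{A_p} = 0$ and $(Mv_\lambda)^T v_{E_j} = 0$, and the minimum spectral norm of such a mapping is $\|v_{A_p}\|/\|Mv_\lambda\|$ (resp.\ $\|v_{E_j}\|/\|Mv_\lambda\|$). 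This eliminates the perturbation variables entirely in favor of the vectors $v_{A_p}, v_{E_j}$.

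Next I would stack these vectors into a single vector $u = [v_{A_0}^T, \ldots, v_{A_d}^T, v_{E_1}^T, \ldots, v_{E_k}^T]^T \in \C^{(d+k+1)n}$. A short calculation shows that the sum of squared minimal norms equals $u^*u/(u^*Ju)$, where $J = \Lambda\Lambda^* \otimes M^*M$ and $\Lambda = [1,\lambda,\ldots,\lambda^d,w_1(\lambda),\ldots,w_k(\lambda)]^T$; the denominator comes from $\|Mv_\lambda\|^2 = u^*Ju$. The $d+k+1$ orthogonality constraints become $u^T C_p u = 0$ with $C_p = \Lambda^T e_{p+1}^T \otimes M^T$, and by symmetrizing via $S_p := C_p + C_p^T$ (which are genuinely symmetric matrices, so that Theorem~\ref{theorem:optimize1} applies) the constraints take the required quadratic form $u^T S_p u = 0$. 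The nondegeneracy $v_\lambda \neq 0$ is equivalent to $u^*Ju \neq 0$ because $M$ is nonsingular.

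I would then observe that the infimum in the resulting formulation may be taken over all nonzero $u$ with $u^T S_p u = 0$ (dropping the constraint $u^*Ju \neq 0$), since the value at any $u$ with $u^*Ju = 0$ is $+\infty$ and the finiteness bound $\eta^{{\rm ssym}}(G,\lambda) \leq \nrm{(A_0,\ldots,A_d,E_1,\ldots,E_k)}$ guarantees a finite minimizer. This gives exactly
\[
(\eta^{{\rm ssym}}(G,\lambda))^2 \;=\; \bigl(m_{js_0\ldots s_{d+k}}(J, S_0, \ldots, S_{d+k})\bigr)^{-1},
\]
at which point I would directly apply the bound~\eqref{eq:summary1} from Theorem~\ref{theorem:optimize1} to obtain the claimed lower bound for $\eta^{{\rm ssym}}(G,\lambda)$, with the reciprocal and square root arising from the inverse relationship and the squared backward error.

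The main obstacle I anticipate is the bookkeeping in Step~2, specifically verifying that the Kronecker factorization $J = \Lambda\Lambda^* \otimes M^*M$ and the matrices $S_p$ really do encode the coupled constraints $(Mv_\lambda)^T v_{A_p} = 0$ simultaneously for all $p,j$, since $v_\lambda$ itself depends on all the $v_{A_p}, v_{E_j}$; writing $v_\lambda = (\Lambda^T \otimes I_n) u$ is the identity that makes this work. A secondary subtlety is the passage from~\eqref{eq:skewreform1} to~\eqref{eq:skewreform2}, where one must argue that restricting to $u^*Ju \neq 0$ does not change the infimum---this relies on the finiteness of $\eta^{{\rm ssym}}(G,\lambda)$, which in turn follows from the trivial admissible perturbation $(A_0,\ldots,E_k)$ noted after the definition of $\eta^{\mathbb{S}}(G,\lambda)$. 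Everything else is a direct substitution into Theorem~\ref{theorem:optimize1}.
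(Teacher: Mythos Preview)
Your proposal is correct and follows essentially the same route as the paper: start from Corollary~\ref{cor1}, apply the skew-symmetric mapping theorem to eliminate the $\Delta A_p,\Delta E_j$, rewrite the minimal-norm expression as the inverse Rayleigh quotient $u^*u/(u^*Ju)$ with $J=\Lambda\Lambda^*\otimes M^*M$, encode the orthogonality constraints via the symmetrized matrices $S_p=C_p+C_p^T$, drop the $u^*Ju\neq 0$ restriction by finiteness of $\eta^{{\rm ssym}}(G,\lambda)$, and invoke Theorem~\ref{theorem:optimize1}. The two anticipated subtleties you flag (the Kronecker identity $v_\lambda=(\Lambda^T\otimes I_n)u$ and the finiteness argument) are exactly the points the paper handles in~\eqref{norm}--\eqref{eq:skewreform3}.
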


Now suppose $G(z)$ is a symmetric RMP of the form~\eqref{rmatrix}. As it was shown in~\cite{Sha16} for symmetric matrix polynomials, it can be easily verified that there is no difference between the structured and the unstructured backward errors for symmetric RMPs. More precisely, we have the following result.

\begin{theorem}
Let $G(z)$ be a RMP of the form~\eqref{rmatrix} and let $\lambda \in \mathbb{C}$ satisfying~\eqref{assump}. Suppose that $M:=(G(\lambda))^{-1}$ exists.  Then 
	\begin{equation}
		\eta^{{\rm sym}}(G,\lambda)=\eta(G,\lambda),
	\end{equation} 
	where $\eta(G,\lambda)$ is given in Theorem~\ref{thm:unstrbacerrfor}.
\end{theorem}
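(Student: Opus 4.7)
The inequality $\eta^{\rm sym}(G,\lambda)\geq \eta(G,\lambda)$ is immediate because symmetric tuples form a subset of $(\C^{n\times n})^{d+k+1}$. The plan is to establish the reverse inequality by exhibiting an explicit symmetric perturbation whose norm matches the unstructured optimum $\sigma/\|\Lambda\|$ given by Theorem~\ref{thm:unstrbacerrfor}, where I abbreviate $\sigma:=\sigma_{\min}(G(\lambda))$ and $\Lambda:=(1,\lambda,\ldots,\lambda^d,w_1(\lambda),\ldots,w_k(\lambda))$.

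The essential structural input is the Autonne--Takagi factorization of complex symmetric matrices: since $G(\lambda)$ is symmetric (because $G(z)$ is), there exists a unit vector $u\in\C^n$ such that $G(\lambda)u=\sigma\bar u$. This one-sided identity, which collapses the usual two-sided SVD pair into a single vector, is the sole place where symmetry of $G(\lambda)$ is genuinely used, and it is what allows a symmetric rank-one correction to absorb the entire unstructured optimum.

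Using such a $u$, I would set
\[
\Delta A_p:=\frac{\overline{\lambda^p}\,\sigma}{\|\Lambda\|^2}\,\bar u\bar u^T \quad (p=0,\ldots,d), \qquad \Delta E_j:=\frac{\overline{w_j(\lambda)}\,\sigma}{\|\Lambda\|^2}\,\bar u\bar u^T \quad (j=1,\ldots,k).
\]
Each matrix is symmetric, being a scalar multiple of the symmetric rank-one matrix $\bar u\bar u^T$, with spectral norms $\|\Delta A_p\|=|\lambda|^p\sigma/\|\Lambda\|^2$ and $\|\Delta E_j\|=|w_j(\lambda)|\sigma/\|\Lambda\|^2$. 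A direct calculation gives $\Delta G(\lambda)=\sigma\bar u\bar u^T$, and $\bar u^T u=\|u\|^2=1$ then yields $(G(\lambda)-\Delta G(\lambda))u=\sigma\bar u-\sigma\bar u=0$, so $\lambda$ is an eigenvalue of the perturbed RMP. Summing squared spectral norms telescopes via $\sum_{p=0}^d|\lambda|^{2p}+\sum_{j=1}^k|w_j(\lambda)|^2=\|\Lambda\|^2$ to
\[
\nrm{(\Delta A_0,\ldots,\Delta A_d,\Delta E_1,\ldots,\Delta E_k)}^2=\frac{\sigma^2}{\|\Lambda\|^2},
\]
giving $\eta^{\rm sym}(G,\lambda)\leq\sigma/\|\Lambda\|=\eta(G,\lambda)$, which completes the argument. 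There is no serious obstacle; the only point of substance is the Takagi step, without which one would be forced to use distinct left and right singular vectors, and the symmetric rank-one ansatz $\bar u\bar u^T$ would no longer meet the unstructured lower bound.
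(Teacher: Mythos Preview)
Your proof is correct, but it proceeds along a different line from the paper's. The paper stays within its general mapping framework: invoking Corollary~\ref{cor1}, it observes that for any target vectors $v_{A_p},v_{E_j}$ the symmetric mapping problems $\Delta A_pMv_\lambda=v_{A_p}$, $\Delta E_jMv_\lambda=v_{E_j}$ are always solvable with minimal norms $\|v_{A_p}\|/\|Mv_\lambda\|$, $\|v_{E_j}\|/\|Mv_\lambda\|$ (no side constraints arise, unlike the skew-symmetric case). This reduces $(\eta^{\rm sym}(G,\lambda))^{-2}$ to the unconstrained maximum of $u^*Ju/u^*u$, namely $\lambda_{\max}(J)=\|\Lambda\|^2\|M\|^2=\|\Lambda\|^2/\sigma_{\min}(G(\lambda))^2$, which is exactly $(\eta(G,\lambda))^{-2}$.

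Your argument is more elementary and explicitly constructive: the Autonne--Takagi step produces a single vector $u$ with $G(\lambda)u=\sigma\bar u$, and the symmetric rank-one blocks $\bar u\bar u^T$ then hit the unstructured optimum on the nose. One small trade-off is that your route genuinely uses the symmetry of $G(\lambda)$ (for Takagi), whereas the paper's mapping argument never does and therefore shows the stronger fact that $\eta^{\rm sym}(G,\lambda)=\eta(G,\lambda)$ for \emph{any} RMP $G$, symmetric or not. In the intended context ($G$ symmetric) both are fine; your version has the advantage of handing over an explicit optimal perturbation.
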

\begin{proof}
In view of Corollary~\ref{cor1}, for any given vectors $v_{A_0},\ldots,v_{A_d},v_{E_1},\ldots,v_{E_k} \in \C^n$ such that $v_\lambda \neq 0$, symmetric matrices $\Delta A_p$, $p=0,\ldots,d$, and $\Delta E_j$, $j=1,\ldots,k$ may be chosen to satisfy 
\[
\Delta A_pMv_{\lambda}=v_{A_p} \quad \text{and} \quad \Delta E_jMv_{\lambda} =v_{E_j}
\]
without any restrictions on $Mv_\lambda$, $v_{A_p}$, or $v_{E_j}$. 
Thus using the minimal norm symmetric mappings from~~\cite[Theorem 2.2.2]{Adh08} in Corolllary~\ref{cor1}, and following the arguments similar to the proof of Theorem~\ref{thm:formulaSkew}, we have
\[
\eta^{{\rm sym}}(G,\lambda)=\frac{1}{\sqrt{\lambda_{\max}(J)}}=\frac{\sigma_{\min} (G(\lambda))}{\|(1,\lambda,\ldots,\lambda^d,w_1(\lambda),\ldots,w_k(\lambda))\|}=\eta(G,\lambda),
\]
where 
$J = \Lambda\Lambda^* \otimes M^*M$ and  $\Lambda:=[1,\lambda,\ldots,\lambda^d, w_1(\lambda),\ldots,w_k(\lambda)]^T$.
\end{proof}

\section{T-even/T-odd}\label{sec:eve/odd}

In this section, we consider T-even and T-odd RMP's of the form~\eqref{rmatrix} which satisfy $(G(z))^T=G(-z)$ or $(G(z))^T=-G(-z)$, respectively. This implies that the coefficient matrices of T-even RMPs $G(z)$ satisfy that $A_p^T=(-1)^pA_p$ for $p=0,\ldots,d$, and for $j=1,\ldots,k$ $E_j^T=E_j$ if $w_j(z)$ is an even function and $E_j^T=-E_j$ if $w_j(z)$ is an odd function. Similarly, for T-odd RMPs $G(z)$, $A_p^T=(-1)^{p+1}A_p$ for $p=0,\ldots,d$, and for $j=1,\ldots,k$ $E_j^T=E_j$ if $w_j(z)$ is an odd function and $E_j^T=-E_j$ if $w_j(z)$ is an even function, see Table~\ref{tab:my_label}.

To tackle, the T-even/T-odd polynomials, we partitionn the set $X=\{1,\ldots,k\}$ into two subsets of indices $X_o:=\{p_1,\ldots,p_r\}$ and $X_e:=\{p_{r+1},\ldots,p_k\}$ with $p_i < p_j$ for $i<j$ such that 
$w_{p}(z)$ is an odd function if $p \in X_o$ and $w_{p}(z)$ is an even function if $p \in X_e$.
Thus for T-even structures, we have $A_p^T=(-1)^pA_p$ for $p=0,\ldots,d$, and for  $E_j^T=E_j$ if $j \in X_e$ and $E_j^T=-E_j$ if $j \in X_o$.

Let us define two subsets of matrix tuples corresponding to T-even and T-odd structures by
\begin{eqnarray}
 \mathcal{S}_e:&=\big \{ (A_0,\ldots, A_d, E_1,\ldots ,E_k) \in (\C^{n,n})^{d+k+1}\; : \; A_p^T=(-1)^pA_p, \; p=0,\ldots, d,\\
 & E_j^T=E_j\;\text{if}\; j\in X_e \;~ \text{and}\; ~E_j^T=-E_j \; \text{if}\;j \in X_o \big \},
\end{eqnarray}
and 
\begin{eqnarray}
 \mathcal{S}_o:&=\big \{ (A_0,\ldots, A_d, E_1,\ldots ,E_k) \in (\C^{n,n})^{d+k+1}\; : \; A_p^T=(-1)^{p+1}A_p, \; p=0,\ldots, d,\\
 & E_j^T=E_j\;\text{if}\; j\in X_o \;~ \text{and}\; ~E_j^T=-E_j \; \text{if}\;j \in X_e  \big \},
\end{eqnarray}
and denote the T-even eigenvalue backward error by $\eta^{{\rm even_T}}(G,\lambda)$ when $\mathbb S=\mathcal S_e$ in~\eqref{def:error}, and the T-odd eigenvalue backward error by $\eta^{{\rm odd_T}}(G,\lambda)$ when $\mathbb S=\mathcal S_o$ in~\eqref{def:error}. In the rest of this section, we derive a result to estimate the T-even backward error $\eta^{{\rm even_T}}(G,\lambda)$. A similar result for $\eta^{{\rm odd_T}}(G,\lambda)$ can be obtained analogously. 

In view of Corollary~\ref{cor1}, when $\mathbb S=\mathcal S_e$, we have 
%
\begin{eqnarray}\label{error:ham}
        \eta^{{\rm even_T}}(G,\lambda)=&\inf\Big\{\nrm{(\Delta A_0,\ldots,\Delta E_k)} \; : \; (\Delta A_0,\ldots, \Delta A_d, \Delta E_1,\ldots,\Delta E_k)\in \mathcal{S}_e,\nonumber \\
        &\; \exists\, v_{A_0}, \ldots,v_{A_d}, v_{E_1},\ldots,v_{E_k} \in \C^n \;v_{\lambda} \neq 0 \; \Delta A_pMv_{\lambda}=v_{A_p}, \nonumber \\ 
        & ~p=0,\ldots, d,\text{and}~ \Delta E_jMv_{\lambda}=v_{E_j},  \; j=1,\ldots,k \Big \}.
\end{eqnarray}
For any given vectors $x,y \in \C^n$, we can always find a symmetric matrix $\Delta$ such that $\Delta x=y$ without any restriction on the vectors $x$ and $y$, and the minimal spectral norm of such a $\Delta$ is given by $\frac{\|y\|}{\|x\|}$~\cite[Theorem 2.2.2]{Adh08}. On the other hand, for given vectors $x,y \in \C^n$, there exists a skew-symmetric matrix $\Delta$ mapping $x$ to $y$ if and only if $y^Tx=0$, and the minimal spectral norm of such a $\Delta$ satisfy $\frac{\|y\|}{\|x\|}$~\cite[Theorem 2.2.2]{Adh08}. 

Thus any given pairs $(Mv_{\lambda},v_{A_p})$ and $(Mv_{\lambda},v_{E_j})$, we can always find symmetric matrices $\Delta A_p$ and $ \Delta E_j$ such that $\Delta A_pMv_{\lambda}=v_{A_p}$ and $\Delta E_jMv_{\lambda}=v_{E_j}$ without any restrictions on the vectors $Mv_{\lambda}$, $ v_{A_p}$, and $v_{E_j}$. 
On the other hand, there exists skew-symmetric matrices $\Delta A_p$, and $\Delta E_j$ satisfying $\Delta A_pMv_{\lambda}=v_{A_p}$ and $\Delta E_jMv_{\lambda}=v_{E_j}$ if and only if $v_{A_p}^TMv_{\lambda}=0$ and $v_{E_j}^TMv_{\lambda}=0$, which is true if and only if $u^TC_ju=0$, where 
\begin{equation}\label{Ham:Lambda}
    \begin{split}
        C_j & = \begin{cases}
                    \Lambda^Te_{2j+2}^T\otimes M^T \quad & \text{for} \; j=0,\ldots,m\\
\Lambda^Te_{d+1+p_{j-m}}^T\otimes M^T \quad & \text{for} \; j=m+1,\ldots,m+r,
                \end{cases}\\
    \end{split},
\end{equation}
where $m:=\lfloor\frac{d-1}{2}\rfloor $, $\Lambda  =[1,\lambda,\ldots,\lambda^d,w_{1}(\lambda),\ldots,w_{k}(\lambda)]$, and $u = \begin{bmatrix}
v_{A_0}^T & \cdots & v_{A_d}^T & v_{E_{1}}^T & \cdots & v_{E_{k}}^T\end{bmatrix}^T$.

Also using minimal norm symmetric and skew-symmetric mappings from~\cite[Theorem 2.2.2]{Adh08}, the norm of the perturbation $(\Delta A_0,\ldots,\Delta A_d,\Delta E_1,\ldots,\Delta E_k)$ satisfies that 
\begin{equation}\label{norm:Hamiltonian}
    \nrm{(\Delta A_0,\ldots,\Delta A_d,\Delta E_1,\ldots,\Delta E_k)}^2  =\sum_{p=0}^d\frac{\|v_{A_p}\|^2}{\|Mv_{\lambda}\|^2}+\sum_{j=1}^k\frac{\|v_{E_j}\|^2}{\|Mv_{\lambda}\|^2} = \frac{u^*u}{u^*Ju},
\end{equation}
where $J = \Lambda^{*}\Lambda \otimes M^*M$. Similar to Section~\ref{sec:sym}, we have that  for $j=0,\ldots, m+r$, 
\begin{equation}\label{matrix:symham}
   u^TC_ju=0 \iff u^TS_ju \quad \text{and}\quad   v_\lambda \neq 0  \iff  u^*Ju \neq 0,
\end{equation}
where  $S_j=C_j+C_j^T$. In view of~\eqref{norm:Hamiltonian},~ \eqref{matrix:symham} and~\eqref{error:ham},  we have 
\begin{eqnarray}\label{eq:Teven_reform}
(\eta^{{\rm even_T}}(G,\lambda))^2&=&\inf\left\{ \frac{u^*u}{u^*Ju} \;:\; u \in \C^{(d+k+1)n},~u^*Ju \neq 0,\; u^TS_ju=0,~j=0,\ldots, m+r\right\} \nonumber \\
&=& \inf\left\{ \frac{u^*u}{u^*Ju} \;:\; u \in \C^{(d+k+1)n}\setminus\{0\},\; u^TS_ju=0,~j=0,\ldots, m+r\right\} \nonumber \\
& = & (m_{js_0\ldots s_{m+r}}(J,S_0,S_1,\ldots, S_{m+r}))^{-1},
\end{eqnarray}
where the second last inequality is due to the fact that $\eta^{{\rm even_T}}(G,\lambda)$ is finite and the infimum is not attained at the vectors $u$ for which $u^*Ju=0$. The last equality in~\eqref{eq:Teven_reform} is due to~\eqref{opt:sym}.

Since, we aim to apply Theorem~\ref{theorem:optimize1} in~\eqref{eq:Teven_reform}, we need to check that the matrices $S_0, \ldots, S_{m+r}$ satisfy the rank constraint, i.e., for any real tuple $(t_0,\ldots,t_{2(m+r)})$, $\text{rank}((t_0+it_1)S_0+\cdots + t_{2(m+r)}S_{m+r})\geq 2$. This can be verified as
\begin{equation}
        \text{rank}((t_0+it_1)S_0+\cdots + t_{2(m+r)}S_{m+r})  = \text{rank}(\Lambda^T\Omega \otimes M^T + \Omega^T\Lambda \otimes M),
\end{equation}
where $\Omega \in \mathbb{C}^{1 \times (d+k+1)}$ is defined by 
\begin{equation}
\Omega:=
    \begin{cases}
        [0,(t_0+it_1),\cdots,0,(t_{2m}+it_{2m+1}),(t_{2m+2}+it_{2m+3}), \cdots, t_{2(m+r)}, 0, \cdots, 0] & \text{if d odd}\\
        [0,(t_0+it_1),\cdots,0,(t_{2m}+it_{2m+1}),0,(t_{2m+2}+it_{2m+3}), \cdots, t_{2(m+r)}, 0, \cdots, 0] & \text{if d even.}
    \end{cases}
\end{equation}

Observe that since $\Omega$ and $\Lambda$ are linearly independent, from Lemma~\ref{Gamma} we have that 
\[
\text{rank}(\Lambda^T\Omega \otimes M^T + \Omega^T\Lambda \otimes M) =2n \geq 2.
\]
Thus as a direct application of Theorem~\ref{theorem:optimize1} in~\eqref{eq:Teven_reform}, we obtain the following result for the T-even backward error $\eta^{{\rm even_T}}(G,\lambda)$.

\begin{theorem}\label{thm:formulaTeven}
Let $G(z)$ be a T-even RMP of the form~\eqref{rmatrix}  and let $\lambda \in \mathbb{C} \setminus\{0\}$ satisfying~\eqref{assump}. Suppose that $M:=(G(\lambda))^{-1}$ exists. 
Define 
 $m:=\left \lfloor{\frac{d-1}{2}}\right \rfloor$ and let 
Let $\psi: \mathbb{R}^{2(m+r)+1}\rightarrow \mathbb{R}$ be defined by $(t_0,\ldots,t_{2(m+r)})\rightarrow \lambda_2(F(t_0,\ldots,t_{2(m+r)}))$, where $F(t_0,\ldots,t_{2(m+r)})$ is as defined in \eqref{eq:defF} for matrices $J,S_0,\ldots,S_{m+r}$ of~\eqref{matrix:symham}. Then,
\begin{itemize}
\item The function $\psi$ has a global minimum
\[
\widehat \lambda_2 := \min_{(t_0,\ldots,t_{2(m+r)})\in \R^{2(m+r)+1}} \psi (t_0,\ldots,t_{2(m+r)}) \quad \text{and}\quad 
\eta^{{\rm even_T}}(G,\lambda) \geq \frac{1}{\sqrt{\widehat \lambda_2}}.
\]
\item Moreover, if the minimum $\widehat \lambda_2$ of $\psi$  is attained 
at $(\hat t_0,\ldots,\hat t_{2(m+r)}) \in \R^{2(m+r)+1}$ and is a simple eigenvalue of $F(\hat t_0,\ldots,\hat t_{2(m+r)}) $, then 
\[
\eta^{{\rm even_T}}(G,\lambda) = \frac{1}{\sqrt{\widehat \lambda_2}}.
\]
\end{itemize}
\end{theorem}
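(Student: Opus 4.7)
The approach is to apply Theorem~\ref{theorem:optimize1} directly to the reformulation derived in the discussion preceding the theorem statement, culminating in~\eqref{eq:Teven_reform}, where
\[
(\eta^{{\rm even_T}}(G,\lambda))^2 = (m_{js_0\ldots s_{m+r}}(J,S_0,S_1,\ldots, S_{m+r}))^{-1}.
\]
Theorem~\ref{theorem:optimize1}(1) gives the bound $m_{js_0\ldots s_{m+r}} \leq \inf_t \psi(t)$, which upon reciprocation (and taking square roots) yields $\eta^{{\rm even_T}}(G,\lambda) \geq 1/\sqrt{\inf_t \psi(t)}$; item (2) of the same theorem then supplies the matching equality under the simple-eigenvalue hypothesis.

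The main technical step is verifying the rank hypothesis of Theorem~\ref{theorem:optimize1}(1): for every nonzero $(t_0,\ldots,t_{2(m+r)}) \in \R^{2(m+r)+1}$, the matrix $f(t_0,\ldots,t_{2(m+r)})$ must have rank at least $2$. Writing $S_j = C_j + C_j^T$ and substituting the $C_j$ from~\eqref{Ham:Lambda}, the matrix $f$ takes the Kronecker form $\Lambda^T\Omega \otimes M^T + \Omega^T \Lambda \otimes M$, with $\Omega$ as defined in the preamble. Lemma~\ref{lem:B} then forces $\operatorname{rank}(f) = 2n \geq 2$ provided $\Lambda$ and $\Omega$ are linearly independent as $1\times(d+k+1)$ row vectors. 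The linear independence follows because, under the hypotheses $\lambda \neq 0$ and~\eqref{assump}, every entry of $\Lambda$ is nonzero, whereas $\Omega$ carries structural zeros at the positions indexing the symmetric coefficients (i.e., the even-index $A_p$ slots and the $j \in X_e$ slots); hence $\Omega$ cannot be a scalar multiple of $\Lambda$ for any nonzero tuple $(t_0,\ldots,t_{2(m+r)})$.

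With the rank condition in hand, item (1) of Theorem~\ref{theorem:optimize1} shows that $\inf \psi$ is attained on a bounded region of $\R^{2(m+r)+1}$, so a global minimum $\widehat \lambda_2 = \min \psi$ exists, establishing the first bullet. The second bullet then follows immediately from item (2) of Theorem~\ref{theorem:optimize1}: if $\widehat \lambda_2$ is a simple eigenvalue of $F(\hat t_0,\ldots,\hat t_{2(m+r)})$, then equality $m_{js_0\ldots s_{m+r}} = \widehat \lambda_2$ holds, giving $\eta^{{\rm even_T}}(G,\lambda) = 1/\sqrt{\widehat \lambda_2}$.

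The step I expect to require the most care is the parity-dependent bookkeeping for $\Omega$, whose zero pattern differs according to whether $d$ is even or odd (as reflected in the case distinction in the preamble definition of $\Omega$). In both cases, however, the nonzero entries of $\Omega$ lie exactly at the positions where $\Lambda$ is nonzero and where the coefficient is skew-symmetric, so once the structural zero pattern is matched against the entries of $\Lambda$, the linear independence (and hence the rank lower bound) holds uniformly and the rest of the argument is a mechanical invocation of Theorem~\ref{theorem:optimize1}.
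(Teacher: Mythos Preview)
Your proposal is correct and follows essentially the same approach as the paper: both reduce to the reformulation~\eqref{eq:Teven_reform}, verify the rank condition on $f(t_0,\ldots,t_{2(m+r)})$ by writing it as $\Lambda^T\Omega\otimes M^T+\Omega^T\Lambda\otimes M$ and invoking Lemma~\ref{lem:B}, and then apply Theorem~\ref{theorem:optimize1} directly. Your justification of the linear independence of $\Lambda$ and $\Omega$ (via the structural zero of $\Omega$ in the first slot versus the nonzero first entry of $\Lambda$) is in fact more explicit than the paper, which simply asserts the independence.
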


By following the lines of the T-even RMPs, the eigenvalue backward error for T-odd RMPs can also be obtained. More precisely, we have the following analogue of~\eqref{eq:Teven_reform} for T-odd structure,
\begin{equation}\label{reduced:Todd}
(\eta^{{\rm odd_T}}(G,\lambda))^2=\inf\left\{ \frac{u^*u}{u^*Ju} \;:\;u \in \C^{(d+k+1)n},\; u^TS_ju=0 \; \text{for} \; j=0,\ldots,m+d-r\right\},
\end{equation}
where $m=\lfloor \frac{d}{2}\rfloor$, $J$ is as given in~\eqref{matrix:symham}, and for $j=0,\ldots,m+d-r$ $S_j=C_j+C_j^T$ with $C_j$ defined as 
\begin{equation*}
    \begin{split}
        C_j & = \begin{cases}
                    \Lambda^Te_{2j+1}^T\otimes M^T \quad & \text{for} \; j=0,\ldots,m,\\
  \Lambda^Te_{d+1+p_{j-m+r}}^T\otimes M^T  \quad & \text{for} \; j=m+1,\ldots,m+d-r,
                \end{cases}.\\
    \end{split}
\end{equation*}
 Thus a direct application of Theorem~\ref{theorem:optimize1} in~\eqref{reduced:Todd}, we have the following result for $\eta^{{\rm odd_T}}(G,\lambda)$.

\begin{theorem}\label{thm:formulaTodd}
Let $G(z)$ be a T-odd RMP of the form~\eqref{rmatrix}  and let $\lambda \in \mathbb{C} \setminus\{0\}$ satisfying~\eqref{assump}. Suppose that $M:=(G(\lambda))^{-1}$ exists.  Define 
 $m:=\left \lfloor{\frac{d}{2}}\right \rfloor$ and let 
Let $\psi: \mathbb{R}^{2(m+d-r)+1}\rightarrow \mathbb{R}$ be defined by $(t_0,\ldots,t_{2(m+d-r)})\rightarrow \lambda_2(F(t_0,\ldots,t_{2(m+d-r)}))$, where $F(t_0,\ldots,t_{2(m+r)})$ is as defined in \eqref{eq:defF} for matrices $J,S_0,\ldots,S_{m+d-r}$ of~\eqref{reduced:Todd}. Then,
\begin{itemize}
\item The function $\psi$ has a global minimum
\[
\widehat \lambda_2 := \min_{(t_0,\ldots,t_{2(m+d-r)})\in \R^{2(m+d-r)+1}} \psi (t_0,\ldots,t_{2(m+d-r)}) \quad \text{and}\quad 
\eta^{{\rm even_T}}(G,\lambda) \geq \frac{1}{\sqrt{\widehat \lambda_2}}.
\]
\item Moreover, if the minimum $\widehat \lambda_2$ of $\psi$  is attained 
at $(\hat t_0,\ldots,\hat t_{2(m+d-r)}) \in \R^{2(m+d-r)+1}$ and is a simple eigenvalue of $F(\hat t_0,\ldots,\hat t_{2(m+d-r)}) $, then 
\[
\eta^{{\rm odd_T}}(G,\lambda) = \frac{1}{\sqrt{\widehat \lambda_2}}.
\]
\end{itemize}
\end{theorem}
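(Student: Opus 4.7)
The plan is to mirror the proof of Theorem~\ref{thm:formulaTeven}, exchanging the roles of symmetric and skew-symmetric minimal-norm mappings in accordance with the T-odd constraints of Table~\ref{tab:my_label}. First I will apply Corollary~\ref{cor1} to reduce the evaluation of $\eta^{{\rm odd_T}}(G,\lambda)$ to the existence of vectors $v_{A_0},\ldots,v_{A_d},v_{E_1},\ldots,v_{E_k} \in \C^n$ with $v_\lambda \neq 0$, together with matrices $\Delta A_p, \Delta E_j$ of minimal norm lying in $\mathcal{S}_o$ and mapping $Mv_\lambda$ to the prescribed target vectors. Under T-odd structure, $\Delta A_p$ will be symmetric when $p$ is odd and skew-symmetric when $p$ is even, while $\Delta E_j$ will be symmetric for $j \in X_o$ and skew-symmetric for $j \in X_e$, which is precisely the opposite parity pattern of the T-even case.

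Next, I will invoke~\cite[Theorem 2.2.2]{Adh08}: symmetric minimal-norm mappings impose no constraint on the source/target pair and have norm $\|v\|/\|Mv_\lambda\|$, whereas skew-symmetric mappings require the orthogonality $v^T M v_\lambda = 0$ with the same minimal norm. Each such orthogonality condition, arising for each even $p$ and each $j \in X_e$, I will rewrite as $u^T C_j u = 0$ with $C_j$ as listed just before~\eqref{reduced:Todd}, and then symmetrise via $S_j := C_j + C_j^T$ to obtain the form $u^T S_j u = 0$. Summing the minimal norms yields $\|(\Delta A_0,\ldots,\Delta E_k)\|^2 = u^*u/(u^*Ju)$ with $J$ as in~\eqref{matrix:symham}; combined with the equivalence $v_\lambda \neq 0 \iff u^*Ju \neq 0$ (since $M$ is nonsingular) and the finiteness of $\eta^{{\rm odd_T}}(G,\lambda)$, this produces the reformulation~\eqref{reduced:Todd} and identifies $(\eta^{{\rm odd_T}}(G,\lambda))^{-2}$ with $m_{js_0\ldots s_{m+d-r}}(J,S_0,\ldots,S_{m+d-r})$.

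The final step will be to apply Theorem~\ref{theorem:optimize1}, which demands a verification of the rank-2 condition on $f(t_0,\ldots,t_{2(m+d-r)})$. My plan is to write this matrix in the Kronecker form $\Lambda^T\Omega \otimes M^T + \Omega^T \Lambda \otimes M$ for a row vector $\Omega \in \C^{1 \times (d+k+1)}$ whose nonzero entries sit exactly at the slots where skew constraints arise, and then appeal to Lemma~\ref{lem:B} to obtain rank $2n \geq 2$ as long as $\Omega$ and $\Lambda$ are linearly independent. This is where the hypothesis $\lambda \neq 0$ enters: all entries of $\Lambda$ are nonzero (using~\eqref{assump} for the rational part), whereas $\Omega$ has a forced zero at the slot of any symmetric (hence unconstrained) coefficient, which prevents $\Omega$ from being a scalar multiple of $\Lambda$ whenever the defining tuple is nontrivial. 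Parts~(i) and~(ii) of Theorem~\ref{theorem:optimize1} will then deliver the existence of the global minimum $\widehat{\lambda}_2$, the inequality $\eta^{{\rm odd_T}}(G,\lambda) \geq 1/\sqrt{\widehat{\lambda}_2}$, and equality under the simplicity assumption on $\widehat{\lambda}_2$. The principal obstacle is organisational rather than analytic: I must carefully track which polynomial and rational indices contribute skew-symmetric constraints in the T-odd case — mirrored from T-even — so that the parameter-space dimension $2(m+d-r)+1$, the placement of nonzero entries in $\Omega$, and the constraint matrices $C_j$ come out correctly; once this bookkeeping is fixed, every analytic step is a transcription of the T-even argument.
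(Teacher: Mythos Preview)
Your proposal is correct and follows essentially the same route as the paper: the paper states explicitly that the T-odd result is obtained ``by following the lines of the T-even RMPs'' and then applies Theorem~\ref{theorem:optimize1} directly to the reformulation~\eqref{reduced:Todd}, with the rank-2 hypothesis verified exactly as you describe via the Kronecker factorisation and Lemma~\ref{lem:B}. Your bookkeeping of which coefficients are symmetric versus skew-symmetric under $\mathcal S_o$, and hence which indices contribute constraints $u^TS_ju=0$, matches the paper's definition of the $C_j$ preceding~\eqref{reduced:Todd}.
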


%
\section{Hermitian and related strucuture} \label{sec:herm}

In this section, we consider RMPs $G(z)$ of the form~\eqref{rmatrix} with Hermitian and related structures like skew-Hermitian, $*$-even, and $*$-odd. Recall, from Table~\ref{tab:my_label} that $G(z)$ is Hermitian if and only if the coefficient matrices $A_j$'s and $E_j$'s in  $G(z)$ are all Hermitian and the weight functions satisfy that $(w_j(z))^*=w_j(\overline z)$ for $j=1,\ldots,k$, i.e., $(G(z))^*=G(\overline z)$.  In this case, the corresponding structured eigenvalue backward error is denoted by $\eta^{{\rm Herm}}(G,\lambda)$ and defined by~\eqref{def:error} when $\mathbb S=\left({\rm Herm}(n)\right)^{d+k+1}$.

As shown in~\cite{MR3194659} for Hermitian matrix polynomials, when $\lambda \in \R$, it is easy to check that there is no difference between the structured and the unstructured backward errors. More precisely, if $G(z)$ is a Hermitian RMP of the form~\eqref{rmatrix} and $\lambda \in \R$. Then 
\begin{equation}\label{eq:unstrhermres}
\eta^{{\rm Herm}}(G, \lambda)=\eta(G, \lambda)=\frac{\sigma_{\min }(G(\lambda))}{\left\|\left(1, \lambda,\ldots,\lambda^d,w_1(\lambda) \ldots, w_{k}(\lambda)\right)\right\|} .
\end{equation}
This follows from the fact that when $\lambda$ is real, then the perturbation matrices $\Delta A_{p}$, $p=0,\ldots,d$ and $\Delta E_j$, $j=1,\ldots,k$ in~\eqref{matrix:unst} are all Hermitian, which yields~\eqref{eq:unstrhermres}.

However, this is not the case when $\lambda \in \C\setminus \R$ as shown in the following result. 

%

\begin{theorem}\label{thm:Herm}
Let $G(z)$ be a Hermitian RMP of the form~\eqref{rmatrix} and let $\lambda \in \C \setminus \R$ satisfying~\eqref{assump}. Suppose that $M:=(G(\lambda))^{-1}$ exists. Define 
$\Lambda=\big[
1 ~\lambda ~ \cdots ~ \lambda^d ~ w_1(\lambda) ~ \cdots ~ w_k(\lambda) \big]$ 
and set,
\begin{equation*}
J := \Lambda^* \Lambda \otimes M^*M \quad \text{and}\quad 
H_j := i\left(\Lambda^*e_{j+1} \otimes M^* - e_{j+1}\Lambda \otimes M\right) \quad j=0,\ldots,d+k,
\end{equation*}
where $e_{j+1}$ denotes the $(j+1)^{th}$ standard unit vector of 
$\mathbb R^{d+k+1}$. Then, 
\[
 \widehat \lambda_{\max}:= \min_{t_0,\ldots,t_{d+k}\in \mathbb R} \lambda_{\max}\left( J+ t_0H_0 + \cdots + t_{d+k}H_{d+k} \right) 
 \]
is attained for some $(\widehat t_0,\ldots, \widehat t_{d+k})\in \mathbb R^{d+k+1}$, and we have $\eta^{{\rm Herm}}(G,\lambda) \geq  \frac{1}{\sqrt{\widehat \lambda_{\max}}}$.
 Furhter, if $\widehat \lambda_{\max}$ is a simple eigenvalue of 
$J+  \widehat t_0H_0 + \cdots + \widehat  t_{d+k}H_{d+k} $, then
\begin{equation}\label{herm:errorineq}
    \eta^{{\rm Herm}}(G,\lambda) =  \frac{1}{\sqrt{\widehat \lambda_{\max}}}.
\end{equation}
\end{theorem}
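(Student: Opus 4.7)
The argument parallels the derivations of Theorems~\ref{thm:formulaSkew} and~\ref{thm:formulaTeven}, with two changes: the symmetric/skew-symmetric mapping theorems are replaced by the Hermitian mapping theorem, and Theorem~\ref{theorem:optimize1} is replaced by Theorem~\ref{theorem:optimize2}. The key new input is the hypothesis $\lambda\in\C\setminus\R$, which is what makes the Hermitian reality constraints nontrivial.

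\emph{Step 1 (reduction to a Hermitian mapping problem).} Starting from Corollary~\ref{cor1}, the task is to pick vectors $v_{A_p},v_{E_j}$ and Hermitian matrices $\Delta A_p,\Delta E_j$ with $\Delta A_p Mv_\lambda=v_{A_p}$ and $\Delta E_j Mv_\lambda=v_{E_j}$. By the Hermitian mapping theorem \cite[Theorem 2.2.2]{Adh08}, such matrices exist iff $(Mv_\lambda)^*v_{A_p}\in\R$ and $(Mv_\lambda)^*v_{E_j}\in\R$; the minimum spectral norm in each case is $\|v_{A_p}\|/\|Mv_\lambda\|$ (respectively $\|v_{E_j}\|/\|Mv_\lambda\|$). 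Plugging these minimum-norm mappings into the objective of Corollary~\ref{cor1} yields the ratio $(\sum_p\|v_{A_p}\|^2+\sum_j\|v_{E_j}\|^2)/\|Mv_\lambda\|^2$.

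\emph{Step 2 (vectorisation and Kronecker reformulation).} Stacking $u=[v_{A_0}^T,\ldots,v_{A_d}^T,v_{E_1}^T,\ldots,v_{E_k}^T]^T$ and using $v_\lambda=(\Lambda\otimes I_n)u$ with standard Kronecker identities, the denominator becomes $\|Mv_\lambda\|^2=u^*Ju$, and each reality constraint $(Mv_\lambda)^*v_{A_p}\in\R$ (respectively $(Mv_\lambda)^*v_{E_j}\in\R$) becomes $u^*H_pu=0$ with $H_p$ the Hermitian matrix given in the theorem (the factor $i$ converts the imaginary part of the scalar into a real quadratic form). The non-degeneracy $v_\lambda\neq 0$ is equivalent, since $M$ is nonsingular, to $u^*Ju\neq 0$. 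Since the backward error is finite, the infimum cannot be taken on the zero set of $u^*Ju$, mirroring the step from~\eqref{eq:skewreform1} to~\eqref{eq:skewreform2}, and one obtains
\[
(\eta^{{\rm Herm}}(G,\lambda))^{-2}=\sup\left\{\frac{u^*Ju}{u^*u}\,:\,u\in\C^{(d+k+1)n}\setminus\{0\},\ u^*H_pu=0,\ p=0,\ldots,d+k\right\}.
\]

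\emph{Step 3 (applying Theorem~\ref{theorem:optimize2}).} The right-hand side is precisely the problem~\eqref{opt:herm}. Theorem~\ref{theorem:optimize2}(i) gives the inequality $\eta^{{\rm Herm}}(G,\lambda)\geq 1/\sqrt{\widehat\lambda_{\max}}$, together with the fact that the global minimum $\widehat\lambda_{\max}$ of $L$ is attained. If this minimum is a simple eigenvalue of $J+\widehat t_0 H_0+\cdots+\widehat t_{d+k}H_{d+k}$, then Theorem~\ref{theorem:optimize2}(ii) furnishes a maximizing $u$ satisfying all Hermitian constraints, which promotes the inequality to the asserted equality~\eqref{herm:errorineq}.

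\emph{Main obstacle.} Theorem~\ref{theorem:optimize2} requires that every nonzero real combination $\sum_p\alpha_p H_p$ be indefinite. A direct computation rewrites
\[
\sum_{p=0}^{d+k}\alpha_p H_p = i\bigl(\Lambda^*\alpha^T\otimes M^*-\alpha\Lambda\otimes M\bigr),
\]
with $\alpha=(\alpha_0,\ldots,\alpha_{d+k})^T\in\R^{d+k+1}\setminus\{0\}$; this is a Hermitian $2\times 2$ block-Kronecker expression analogous to Lemma~\ref{lem:B} (though with $M^*$ in place of $M^T$). The non-realness of $\lambda$ is used precisely here: it forces the row vectors $\Lambda$ and $\alpha^T$ in $\C^{1\times(d+k+1)}$ to be $\C$-linearly independent (else the first entry $1$ of $\Lambda$ would make all $\lambda^p$ real), from which one deduces, via an inertia argument on the resulting rank-$2n$ Hermitian matrix, that it has equal numbers of strictly positive and strictly negative eigenvalues and is therefore indefinite. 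Verifying this cleanly is the delicate technical step of the proof; once it is in place, the remainder is a mechanical application of Theorem~\ref{theorem:optimize2}.
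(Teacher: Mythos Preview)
Your Steps 1--3 match the paper's argument exactly: the reduction via Corollary~\ref{cor1} and the Hermitian mapping theorem, the Kronecker rewriting into $u^*Ju/u^*u$ subject to $u^*H_pu=0$, and the appeal to Theorem~\ref{theorem:optimize2} are precisely what the paper does.

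The only substantive difference is in how the indefiniteness hypothesis of Theorem~\ref{theorem:optimize2} is verified. You propose to argue that $\lambda\notin\R$ forces $\Lambda$ and $\alpha^T$ to be $\C$-linearly independent, and then to invoke an inertia argument showing the rank-$2n$ Hermitian matrix $i(\Lambda^*\alpha^T\otimes M^*-\alpha\Lambda\otimes M)$ has signature $(n,n,\cdot)$. This works (a congruence sending $(\Lambda^*,\alpha)$ to $(e_1,e_2)$ reduces the nonzero block to $\left[\begin{smallmatrix}0&iM^*\\-iM&0\end{smallmatrix}\right]$, whose eigenvalues are $\pm\sigma_j(M)$), but you leave it as a sketch. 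The paper instead writes down an explicit upper-bidiagonal matrix $Q$ with $\Lambda Q=e_1^T$, computes $(Q\otimes I_n)^*H(Q\otimes I_n)$ in block form, and reads off directly that semidefiniteness forces $Q^*\alpha=a_0e_1$; it then uses the specific entries of $Q^*\alpha$ together with $\lambda\notin\R$ to force $a_0=0$ and hence $\alpha=0$. Your route is arguably cleaner and more conceptual (it isolates exactly where $\lambda\notin\R$ enters and avoids the case analysis on $d$ that the paper's companion palindromic proof requires), while the paper's route is fully explicit and reusable verbatim in the $*$-palindromic case of Theorem~\ref{thm:mainpalin}.
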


\begin{proof}
From Corollary~\ref{cor1} when $\mathbb S = \left({\rm Herm}(n)\right)^{d+k+1}$, the problem of computing $\eta^{{\rm Herm}}(G,\lambda)$ can be simplified using minimal norm Hermitian mappings. In view of~\cite[Theorem 2.2.3]{Adh08}, for any $v_{A_0},\ldots, v_{A_d},v_{E_1},\ldots,v_{E_k} \in \mathbb{C}^{n}$ with $v_{\lambda}:= \sum_{p=0}^d \lambda^p v_{A_p} + \sum_{j=1}^k w_j(\lambda) v_{E_j} \neq 0$ there exist Hermitian matrices $\Delta A_p$ and $\Delta E_j$ for $p=0,\ldots,d$ and $j=1,\ldots,k$ such that 
\begin{equation}\label{map:herm}
\Delta A_p Mv_{\lambda}=v_{A_p},\;\; \Delta E_jMv_{\lambda}=v_{E_j}
\end{equation}
if and only if $\imag{v_{A_p}^*Mv_{\lambda}}=0$ and $\imag{v_{E_j}^*Mv_{\lambda}}=0$. The latter $d+k+1$ conditions can be collectively written in terms of $d+k+1$ Hermitian constraints $u^*H_ju=0$ for $j=0,\ldots,d+k$, where 
\begin{equation}\label{matrix:herm}
    u := {\big[
v_{A_0}^T ~ \cdots ~ v_{A_d}^T ~ v_{E_1}^T ~ \cdots ~ v_{E_k}\big]}^T \quad \text{and}\quad H_j=i(\Lambda^*e_{j+1}^*\otimes M^*-e_{j+1}\Lambda \otimes M).
\end{equation}
 The first $d+1$ constraints $u^*H_pu=0$ for $p=1,\ldots,d$ in~\eqref{matrix:herm} are corresponding to the conditions $\imag{v_{A_p}^*Mv_{\lambda}}=0$ and the last $k$ constraints $u^*H_pu=0$ for $p=d+1,\ldots,d+k$ in~\eqref{matrix:herm} are corresponding to the conditions $\imag{v_{E_p}^*Mv_{\lambda}}=0$. In view of~\cite[Theorem 2.2.3]{Adh08}, the minimal norm $\nrm{(\Delta A_0,\ldots,\Delta A_d,\Delta E_1,\ldots,\Delta E_k)}$ for a fixed tuple $(\Delta A_0,\ldots,\Delta A_d,\Delta E_1,\ldots,\Delta E_k)$ is then given by 
\begin{equation}\label{norm:herm}
    {\nrm{(\Delta A_0,\ldots,\Delta A_d,\Delta E_1,\ldots,\Delta E_k)}}^2  =\sum_{p=0}^d\frac{\|v_{A_i}\|^2}{\|Mv_{\lambda}\|^2}+\sum_{j=1}^k\frac{\|v_{E_j}\|^2}{\|Mv_{\lambda}\|^2} = \frac{u^*u}{u^*Ju},
\end{equation}
where $J = \Lambda^* \Lambda \otimes M^*M$. Also note that 
$ u^*Ju={\|Mv_\lambda\|}^2 \neq 0$ if and only if $\Leftrightarrow v_\lambda \neq 0$, since $M$ is invertible. Using the above observations in Corollary~\ref{cor1} yields that 
\begin{eqnarray}\label{herm:error2}
    (\eta^{{\rm Herm}}(G,\lambda))^2&=&\left(\sup\left\{\frac{u^*Ju}{u^*u} \; | \;u \in \C^{n(d+k+1)},\,u^*Ju \neq 0,\,  u^*H_pu=0,~p=0,\ldots,d+k \right\}\right)^{-1} \nonumber \\
  &=&\left(\sup\left\{\frac{u^*Ju}{u^*u} \; | \; u \in \C^{n(d+k+1)}\setminus \{0\},~u^*H_pu=0, ~ p=0,\ldots,d+k \right\}\right)^{-1},
\end{eqnarray}
where the last equality follows due to the fact that $\eta^{{\rm Herm}}(G,\lambda)$ is finite and therefore the supremum in the right hand side of~\eqref{herm:error2} will not be attained at $u$ for which $u^*Ju =0$. Thus including the vectors $u$ such that $u^*Ju=0$ will not change the value of the supremum in~\eqref{herm:error2}.

Now the result follows directly from Theorem~\ref{theorem:optimize2} if we show that any nonzero linear combination of $\alpha_0 H_0+\ldots+\alpha_{d+k}H_{d+k}$, $(\alpha_0,\ldots,\alpha_{d+k}) \in \R^{d+k+1}$ is indefinite. If possible suppose that there exists $\alpha:={\big [\alpha_0, \ldots, \alpha_{d+k}\big]}^T \in \mathbb{R}^{d+k+1} \setminus \{0\}$ such that 
\begin{equation}
   H:=\sum_{j=0}^{d+k} \alpha_jH_j  =\sum_{j=0}^{d+k} i \alpha_j(\Lambda^*e_{i+1}^*\otimes M^* - e_{i+1}\Lambda \otimes M)
         = i(\Lambda^*\alpha^T \otimes M^* - \alpha \Lambda \otimes M)
\end{equation}
is semidefinite. Define the bidiagonal matrix
\begin{equation}
    Q:=\begin{bmatrix} 
1 & -\lambda &  &  &  &  &  &\\
 0& \ddots & \ddots &  &  &  &  &  \\
 &  &  &  -\lambda &  &  &  &  \\
 &  &  & 1 & -\frac{w_1(\lambda)}{\lambda^d} &  &  &  \\
 &  &  &  & 1 & -\frac{w_2(\lambda)}{w_1(\lambda)} &  &  \\
 &  &  &  &  & \ddots & \ddots & \\
 &  &  &  &  &  &  & -\frac{w_k(\lambda)}{w_{k-1}(\lambda)} \\
 &  &  &  &  &  &  &   1 \\
\end{bmatrix}
\quad \text{and}\quad a:=\mat{c}a_0\\a_1\\ \vdots\\a_{d+k} \rix=Q^*\alpha.
\end{equation}
Then we have $\Lambda Q = e_1^T$ and 
\begin{equation}
    (Q\otimes I_n)^*H(Q \otimes I_n)  = i(e_1a^* \otimes M^* - a e_1^T \otimes M)= i\begin{bmatrix}
    a_0M-\overline{a_0}M^* & -\overline{a_1}M^* & \cdots & -\overline{a_{d+k}M^*}\\
    a_1M &0 &\cdots &0 \\
    \vdots &\vdots & &\vdots \\
    a_{d+k}M &0 &\cdots &0 \\
    \end{bmatrix}.
\end{equation}
Since $H$ is semidefinite and $M$ is invertible it follows that $a_1= \cdots =a_{d+k}=0$, i.e., $a=a_0e_1$. In particular $a_1=0$ implies that $\alpha_1-\overline{\lambda}\alpha_0 =0 $ which further implies that $\alpha_1=\alpha_0=0$ for $\lambda \in \mathbb{C}\setminus \mathbb{R}$ and hence $a_0=\alpha_0=0$. This implies that  $a=0$ and thus $\alpha=0$ which is a contradiction. Thus result follows by applying Theorem~\ref{theorem:optimize2} in~\eqref{herm:error2}.
\end{proof}

As done in~\cite{MR3194659} for Hermitian-related matrix polynomials, we can also derive the structured backward errors for eigenvalues of RMPs with skew-Hermitian, $*$-even, and $*$-odd structures using Theorem~\ref{thm:Herm} of the Hermitian case. This follows by exploiting the fact that 
$G(z)$ is a skew-Hermitian RMP if and only if $R(z)=iG(z)$ is Hermitian. Similarly, it is easy to observe from Table~\ref{tab:my_label} that $G(z)$ is $*$-even or $*$-odd if and only if $R(z)=G(iz)$ or $R(z)=iG(iz)$ is Hermitian. To derive formulas for the structured eigenvalue backward error for $*$-even RMPs, we define
\begin{eqnarray*}
		\mathbb{S}_{e}: &=\Big\{\left(\Delta A_{0}, \ldots,\Delta A_{d},\Delta E_{0},\ldots, \Delta E_{k}\right) \in (\C^{n,n})^{d+k+1} :~ \Delta A_{p}^*=(-1)^{p}\Delta A_p ,\,\\
	&	p=0,\ldots, d,~\Delta E_j \in {\rm Herm}(n),~ j=1,\ldots, k\Big\},
\end{eqnarray*}
when $(w_j(-z))^*=w_j(\overline z)$, and 
\begin{eqnarray*}
		\mathbb{S}_{e}: &=\Big\{\left(\Delta A_{0}, \ldots,\Delta A_{d},\Delta E_{0},\ldots, \Delta E_{k}\right)  \in (\C^{n,n})^{d+k+1}:~  \Delta A_{p}^*=(-1)^{p}\Delta A_p,\\
	&	p=0,\ldots, d,~\Delta E_j \in {\rm SHerm}(n),~ j=1,\ldots, k\Big\},
\end{eqnarray*}
when $(w_j(-z))^*=-w_j(\overline z)$. Similary, for the $*$-odd structure we define
\begin{eqnarray*}
		\mathbb{S}_{o}: &=\Big\{\left(\Delta A_{0}, \ldots,\Delta A_{d},\Delta E_{0},\ldots, \Delta E_{k}\right)  \in (\C^{n,n})^{d+k+1}:~ \Delta A_{p}^*=(-1)^{p+1}\Delta A_p,\\
	&	p=0,\ldots, d,~\Delta E_j \in {\rm Herm}(n),~ j=1,\ldots, k\Big\},
\end{eqnarray*}
when $(w_j(-z))^*=-w_j(\overline z)$, and 
\begin{eqnarray*}
		\mathbb{S}_{o}: &=\Big\{\left(\Delta A_{0}, \ldots,\Delta A_{d},\Delta E_{0},\ldots, \Delta E_{k}\right)  \in (\C^{n,n})^{d+k+1}:~ \Delta A_{p}^*=(-1)^{p+1}\Delta A_p,\\
	&	p=0,\ldots, d,~\Delta E_j \in {\rm SHerm}(n),~ j=1,\ldots, k\Big\},
\end{eqnarray*}
when $(w_j(-z))^*=w_j(\overline z)$. Then from~\eqref{def:error} the corresponding backward errors are denoted by $\eta^{{\rm SHerm}}(G,\lambda)$ when $\mathbb S= ({\rm SHerm})^{d+k+1}$, $\eta^{{\rm even}_*}(G,\lambda)$ when $\mathbb S= {\mathbb S}_e$, and $\eta^{{\rm odd}_*}(G,\lambda)$ when $\mathbb S= {\mathbb S}_o$. We have the following result.

\begin{theorem}
Let $\mathbb S\in \left\{ {\mathbb S}_e,{\mathbb S}_o,({\rm SHerm})^{d+k+1} \right\}$ and let 
$G(z)$ be a RMP of the form~\eqref{rmatrix}, where the coefficient matrices $(A_0,\ldots,A_d,E_1,\ldots,E_k) \in \mathbb S$. Let $\lambda \in \C$ satisfying~\eqref{assump} and define $R(z)=G(iz)$. Then the following holds.
\begin{itemize}
\item when $\mathbb S= ({\rm SHerm})^{d+k+1}$, we have
\[
\eta^{{\rm SHerm}}(G,\lambda)=\eta^{{\rm Herm}}(iG,\lambda).
\]
\item when $\mathbb S= {\mathbb S}_e$, we have
\[
\eta^{{\rm even}_*}(G,\lambda)=\eta^{{\rm Herm}}(R,\frac{\lambda}{i}).
\]
\item when $\mathbb S= {\mathbb S}_o$, we have
\[
\eta^{{\rm odd}_*}(G,\lambda)=\eta^{{\rm Herm}}(iR,\frac{\lambda}{i}).
\]
\end{itemize}
\end{theorem}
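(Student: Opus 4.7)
The plan is, for each of the three cases, to exhibit a norm-preserving bijection between the admissible structured perturbations of $G$ at $\lambda$ and the admissible Hermitian perturbations of a transformed RMP at a corresponding point. Since the backward error is an infimum of the tuple norm over admissible perturbations, such a bijection forces equality of the two backward errors.

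For the skew-Hermitian case, the point is that a matrix $X$ is skew-Hermitian if and only if $iX$ is Hermitian, and $X \mapsto iX$ preserves the spectral norm. Hence $(\Delta A_0,\ldots,\Delta E_k) \in ({\rm SHerm}(n))^{d+k+1}$ iff $(i\Delta A_0,\ldots,i\Delta E_k) \in ({\rm Herm}(n))^{d+k+1}$, and this correspondence preserves the tuple norm. Moreover $iG(z)$ is Hermitian (the coefficients $iA_p, iE_j$ are Hermitian and the weight functions $w_j$ still satisfy $w_j(z)^* = w_j(\overline z)$), and the eigenvalue condition is invariant under left multiplication by $i$, i.e.\ $\det(G(\lambda)-\Delta G(\lambda))=0$ iff $\det(iG(\lambda)-i\Delta G(\lambda))=0$. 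Hence $\eta^{{\rm SHerm}}(G,\lambda) = \eta^{{\rm Herm}}(iG,\lambda)$.

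For the $*$-even and $*$-odd cases, set $R(z):=G(iz)$. Writing $R(z) = \sum_p z^p(i^pA_p) + \sum_j w_j(iz) E_j$, I would first verify that $R$ (respectively $iR$) is Hermitian in the sense of Table~\ref{tab:my_label}: the coefficients $i^pA_p$ of the polynomial part satisfy $(i^pA_p)^* = (-1)^p i^p A_p^* = i^p A_p$ under the $*$-even condition $A_p^* = (-1)^p A_p$ (and analogously for $*$-odd after an extra factor $i$), and a case-by-case check using the two subcases $(w_j(-z))^* = \pm w_j(\overline z)$ combined with the matching sign in $E_j^* = \pm E_j$ shows that $\tilde w_j(z) := w_j(iz)$ obeys $\overline{\tilde w_j(\overline z)} = \tilde w_j(z)$ and that the contribution $\tilde w_j(z) E_j$ to $R$ (resp.\ $iR$) is Hermitian. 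Next, the map $(\Delta A_p)_p, (\Delta E_j)_j \mapsto (i^p \Delta A_p)_p, (\Delta E_j)_j$ (resp.\ with an additional $i$ for $*$-odd) is a bijection between $\mathbb S_e$ (resp.\ $\mathbb S_o$) and Hermitian perturbations of $R$ (resp.\ $iR$), and it preserves the tuple norm because $|i|=1$. Finally, $R(\lambda/i) = G(\lambda)$ and $\Delta R(\lambda/i) = \Delta G(\lambda)$, so $\det(G(\lambda)-\Delta G(\lambda))=0$ is equivalent to $\lambda/i$ being an eigenvalue of $R - \Delta R$ (respectively of $iR - i\Delta R$). Combining, $\eta^{{\rm even}_*}(G,\lambda) = \eta^{{\rm Herm}}(R, \lambda/i)$ and $\eta^{{\rm odd}_*}(G,\lambda) = \eta^{{\rm Herm}}(iR, \lambda/i)$.

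The main obstacle is the structural verification in the $*$-even/$*$-odd cases: one has to confirm that the interaction between the sign flip $z \mapsto -z$ in the $w_j$-conditions and the multiplication $z \mapsto iz$ in the transformation lines up correctly with the sign of $E_j^*$ in each of the two subcases of Table~\ref{tab:my_label}, so that every summand $\tilde w_j(z)\tilde E_j$ contributes with the correct Hermitian/skew-Hermitian parity to make the whole transformed RMP Hermitian. Once this is carried out, the remaining ingredients (norm preservation under multiplication by $i$, invariance of the determinant condition, and bijectivity of the perturbation correspondence) are immediate, and the three identities follow directly from the definition~\eqref{def:error}.
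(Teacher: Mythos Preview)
Your approach coincides with the paper's: the theorem is stated without a formal proof, and the justification given is exactly the observation you spell out, namely that $G\mapsto iG$, $z\mapsto iz$, and their composition transform skew-Hermitian, $*$-even, and $*$-odd RMPs into Hermitian ones, with the induced map on perturbation tuples being a norm-preserving bijection that respects the determinant condition.

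One inaccuracy to fix in your write-up. In the second subcase of $\mathbb S_e$ (and analogously $\mathbb S_o$), where $(w_j(-z))^*=-w_j(\overline z)$ and $E_j^*=-E_j$, your claimed identity $\overline{\tilde w_j(\overline z)}=\tilde w_j(z)$ for $\tilde w_j(z):=w_j(iz)$ is false; one gets $\overline{\tilde w_j(\overline z)}=-\tilde w_j(z)$ instead. Consequently the map $(\Delta A_p,\Delta E_j)\mapsto(i^p\Delta A_p,\Delta E_j)$ does \emph{not} land in $({\rm Herm}(n))^{d+k+1}$ in that subcase, since $\Delta E_j$ stays skew-Hermitian. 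The remedy is to absorb a factor of $i$ into the rational part and represent $R$ with weight $i\tilde w_j$ and coefficient $-iE_j$ (now Hermitian, with a weight satisfying the Hermitian condition); the corresponding bijection sends $\Delta E_j\mapsto -i\Delta E_j$, which is still norm-preserving, and your argument then goes through verbatim.
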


\begin{remark}{\rm \label{rem:optpertherm}
 For the construction of the optimal perturbation $\Delta G(z)=\sum_{p=0}^d z^p {\Delta A}_p + \sum_{j=1}^kw_j(z){\Delta E}_j$ with 
$\nrm{({\Delta A}_0,\ldots,{\Delta A}_d,{\Delta E}_1,\ldots,{\Delta E}_k)}=
\eta^{{\rm Herm}}(G,\lambda)$ and $\text{det}(G(\lambda)-\Delta G(\lambda))=0$, when $\widehat \lambda _{\max}$ is a simple eigenvalue
of $J+\sum_{j=0}^{d+k}\widehat t_j H_j$ in Theorem~\ref{thm:Herm}, we first compute the corresponding eigenvector $u \in {(\C^n)}^{d+k+1}$ such that $u^*H_ju=0$ for all $j=0,\ldots,d+k$. By writing $u=\big[v_{A_0}^T~\cdots~v_{A_d}^T~v_{E_1}^T~\cdots~v_{E_k}^T\big]^T$ with $v_{A_p}^T,v_{E_j}^T \in \C^n$ for $p=0,\ldots,d$ and $j=1,\ldots,k$, and $v_\lambda=\sum_{p=0}^d \lambda^p v_{A_p}+\sum_{j=1}^k w_j(\lambda) v_{E_j}$, the perturbation matrices ${\Delta E}_p$'s and ${\Delta A}_j$'s may be obtained from~\cite[Theorem 2.2.3]{Adh08} such that 
\[
{\Delta A}_pMv_{\lambda}=v_{A_p} ~\text{for}~p=0,\ldots,d,\quad \text{and}\quad {\Delta 
E}_jMv_{\lambda}=v_{E_j}~\text{for}~j=1,\ldots,k.
\]
A similar remark also holds for other Hermitian related structures. 
}
\end{remark}

\section{Palindromic structure } \label{sec:pal}

In this  section, we consider RMPs $G(z)$ of the form~\eqref{rmatrix} with  a palindromic structure, see Table~\ref{tab:my_label}. We use the term
$\bullet$-palindromic, where $\bullet \in \{*,T\}$ to make the statements that are valid for both $*$-palindromic and T-palindromic RMPs. Thus $G(z)$ is $\bullet$-palindromic RMP if the coefficient matrices in $G(z)$ satisfy that $A_p^\bullet = A_{d-p}$ for $j=0,\ldots,d$, and $E_j^\bullet =E_j$ and $(w_j(z))^{\bullet}=(z^{\bullet})^dw_j(\frac{1}{z^{\bullet}})$ for $j=1,\ldots,k$.  To define formulas for the structured eigenvalue backward errors for $\bullet$-palindromic RMPs, we define
\begin{eqnarray}
{\rm Pal}_\bullet := &\Big\{ (\Delta A_0,\ldots,\Delta A_d,\Delta E_1,\ldots,\Delta E_k) \in (\C^{n,n})^{d+k+1}:~(\Delta A_p)^\bullet=\Delta A_{d-p}, \,\\ 
& p=0,\ldots,d, \, E_j^\bullet =E_j,\, j=1,\ldots,k
\Big\}.
\end{eqnarray}
Then the corresponding $\bullet$-palindromic eigenvalue backward error is defined by~\eqref{def:error} when $\mathbb S = {\rm Pal}_\bullet$ and is denoted by $\eta^{{\rm pal}_\bullet}(G,\lambda)$.

We note that when $\lambda \in \C \setminus \{0\}$ such that $|\lambda|=1$ ($\lambda=\pm 1$), then there is no difference between structured and unstructured eigenvalue backward errors of $*$-palindromic  (T-palindromic) RMPs. This fact was shown in~\cite{Ahm19}. However, the situation is completely different if $|\lambda| \neq 1$ (when $\bullet=*$) and $\lambda\neq \pm 1$ (when $\bullet=T$).

Consider a $\bullet$-palindromic RMP $G(z)$ of the form~\eqref{rmatrix}, $\lambda \in \C$ be such that $M:=(G(\lambda))^{-1}$ exists, and let $m:=\lfloor \frac{d-1}{2} \rfloor$. Then from Corollary~\ref{cor1}, we have that 
 \begin{eqnarray}\label{eq:palcorodd}
& \eta^{{\rm pal}_\bullet}(G,\lambda):=\inf\bigg\{  \nrm{(\Delta A_{0}, \ldots, \Delta A_{d},\Delta E_1, \ldots, \Delta E_k)} \; 
 :\;(\Delta A_{0}, \ldots, \Delta A_{d},\Delta E_1, \ldots, \Delta E_k)\in {\rm Pal}_\bullet, \nonumber\\
            &\exists\, v_{A_0}, \ldots,v_{A_d}, v_{E_1},\ldots,v_{E_k} \in \C^n,
            \; v_{\lambda}:=\sum_{p=0}^d\lambda^iv_{A_p}+\sum_{j=1}^kw_j(\lambda)v_{E_j}\neq 0,\,\Delta A_p Mv_{\lambda}=v_{A_p},\nonumber\\
            & \,(\Delta A_p)^\bullet Mv_{\lambda}=v_{A_{d-p}}\,p=0,\ldots,m,\;\text{and}~\, v_{E_j}=\Delta E_jMv_{\lambda},\,j=1,\ldots,k\bigg\},
 \end{eqnarray}
when $d$ is odd, and 
 \begin{eqnarray}\label{eq:palcoreven}
& \eta^{{\rm pal}_\bullet}(G,\lambda):=\inf\bigg\{  \nrm{(\Delta A_{0}, \ldots, \Delta A_{d},\Delta E_1, \ldots, \Delta E_k)} \; 
 :\;(\Delta A_{0}, \ldots, \Delta A_{d},\Delta E_1, \ldots, \Delta E_k)\in {\rm Pal}_\bullet, \nonumber\\
            &\exists\, v_{A_0}, \ldots,v_{A_d}, v_{E_1},\ldots,v_{E_k} \in \C^n ~s.t
            \; v_{\lambda}:=\sum_{p=0}^d\lambda^iv_{A_p}+\sum_{j=1}^kw_j(\lambda)v_{E_j}\neq 0,\,\Delta A_p Mv_{\lambda}=v_{A_p},\nonumber\\
            & \,(\Delta A_p)^\bullet Mv_{\lambda}=v_{A_{d-p}}\,p=0,\ldots,m,\;\Delta A_{\frac{d}{2}}Mv_{\lambda}=v_{A_{\frac{d}{2}}},\,\text{and}~\, v_{E_j}=\Delta E_jMv_{\lambda},\,j=1,\ldots,k\bigg\},
 \end{eqnarray}
when $d$ is even.

In view of~\cite[Theorem 2.1]{MehMS17}, for any $v_{A_0},\ldots, v_{A_d},v_{E_1},\ldots,v_{E_k} \in \mathbb{C}^{n}$ with $v_{\lambda}:= \sum_{p=0}^d \lambda^p v_{A_p} + \sum_{j=1}^k w_j(\lambda) v_{E_j} \neq 0$ there exist $\Delta A_p$ for $p=0,\ldots,m$ such that 
\begin{equation}\label{map:herm}
\Delta A_p Mv_{\lambda}=v_{A_p} \quad \text{and}\quad  (\Delta A_p)^\bullet Mv_{\lambda}=v_{A_{d-p}}
\end{equation}
if and only if $v_{A_p}^{\bullet}Mv_{\lambda}=(Mv_{\lambda})^{\bullet}v_{A_{d-p}}$, and the minimal norm of such a $\Delta A_p$ is given by 
${\|\Delta A_p\|}=\max\left\{ \frac{\|v_{A_p}\|}{\|Mv_{\lambda}\|},\frac{\|v_{A_{d-p}}\|}{\|Mv_{\lambda}\|} \right\}$.

Similarly, when $\bullet=*$, $\Delta E_j$ is Hermitian for all $j=1,\ldots,k$ and from~\cite[Theorem 2.2.3]{Adh08}, there exists $\Delta E_j$ for $j=1,\ldots,k$ such that $\Delta E_jMv_{\lambda}=v_{E_j}$ if and only if $\imag{v_{E_j}^*Mv_{\lambda}}=0$,  or equivalently, $v_{E_j}^*Mv_{\lambda}=(Mv_{\lambda})^*V_{E_j}$. The minimal norm of such a $\Delta E_j$ is given by 
$\|\Delta E_j\|=\frac{\|v_{E_j}\|}{\|Mv_{\lambda}\|}$. Note that when $\bullet=T$, then there always exists symmetric matrices $\Delta E_j$ for $j=1,\ldots,k$ such that $\Delta E_jMv_{\lambda}=v_{E_j}$ and the minimal norm of such a $\Delta E_j$ satisfies that
$\|\Delta E_j\|=\frac{\|v_{E_j}\|}{\|Mv_{\lambda}\|}$. Also when $\bullet =*$ and $d$ is even, then $\Delta A_{\frac{d}{2}}$ is Hermitian. There exists $\Delta A_{\frac{d}{2}}$ such that  $\Delta A_{\frac{d}{2}}Mv_{\lambda}=v_{A_{\frac{d}{2}}}$ iff $\imag{v_{A_{\frac{d}{2}}}^*Mv_{\lambda}}=0$,  or equivalently, $v_{A_{\frac{d}{2}}}^*Mv_{\lambda}=(Mv_{\lambda})^*V_{A_{\frac{d}{2}}}$. 
When $d$ is even and $\bullet = T$, then there always exists a symmetric matrix $\Delta A_{\frac{d}{2}}$ such that $\Delta A_{\frac{d}{2}}Mv_{\lambda}=v_{\frac{d}{2}}$.  The minimal norm of such a $\Delta A_{\frac{d}{2}}$ satisfies that 
$\Delta A_{\frac{d}{2}}=\frac{\big\|v_{A_{\frac{d}{2}}}\big\|}{\|Mv_{\lambda}\|}$.

By using the above mapping conditions in~\eqref{eq:palcorodd}
when $d$ is odd and in~\eqref{eq:palcoreven} when $d$ is even, the 
minimal norm $\nrm{(\Delta A_0,\ldots,\Delta A_d,\Delta E_1,\ldots,\Delta E_k)}$ for a fixed tuple $(v_{A_0},\ldots, v_{A_d},v_{E_1},\ldots,v_{E_k} )$ is then given by 
\begin{equation}\label{eq:funch}
    \nrm{(\Delta A_0,\ldots,\Delta A_d,\Delta E_1,\ldots,\Delta E_k)}^2=h(v_{A_0},\ldots,v_{A_d},v_{E_1},\ldots,v_{E_k}),
\end{equation}
where 
\begin{equation}
    h(v_{A_0},\ldots,v_{E_k})=\begin{cases}
        \sum_{p=0}^m 2 \max\left\{\frac{\|v_{A_p}\|^2}{\|Mv_{\lambda}\|^2},\frac{\|v_{A_{d-p}}\|^2}{\|Mv_{\lambda}\|^2}\right\}+\sum_{j=1}^k\frac{\|v_{E_j}\|^2}{\|Mv_{\lambda}\|^2} & \quad \text{if d odd},\\
        \sum_{p=0}^m 2 \max\left\{\frac{\|v_{A_p}\|^2}{\|Mv_{\lambda}\|^2},\frac{\|v_{A_{d-p}}\|^2}{\|Mv_{\lambda}\|^2}\right\}+\frac{\|v_{\frac{d}{2}}\|^2}{\|Mv_{\lambda}\|^2}+\sum_{j=1}^k\frac{\|v_{E_j}\|^2}{\|Mv_{\lambda}\|^2} & \quad \text{if d even}.
    \end{cases}
\end{equation}
Let us define, 
\begin{eqnarray}\label{constraint:Keven}
    &\mathcal{K}:=\big\{(v_{A_0},\ldots,v_{E_k}) \in (\C^{n})^{d+k+1}\; : ~ v_{\lambda}\neq 0,\;  v_{A_p}^{\bullet}Mv_{\lambda} =(Mv_{\lambda})^{\bullet}v_{A_{d-p}}, \nonumber\\
    &\; p=0,\ldots,m,~v_{E_j}^{\bullet}Mv_{\lambda}=(Mv_{\lambda})^{\bullet}v_{E_j},\; j=1,\ldots, k \big\},
\end{eqnarray}
when $\bullet = T$, or if d is odd and $\bullet = *$, and
\begin{eqnarray}\label{constraint:Kodd}
        \mathcal{K}:=&\big\{(v_{A_0},\ldots,v_{E_k}) \in (\C^n)^{d+k+1} \; :\; v_{\lambda}\neq 0,~v_{A_p}^{\bullet}Mv_{\lambda}=(Mv_{\lambda})^{\bullet}v_{A_{d-p}},~p=0,\ldots,m, \nonumber \\
    & v_{A_{\frac{d}{2}}}^{\bullet}Mv_{\lambda}  =(Mv_{\lambda})^{\bullet}v_{A_{\frac{d}{2}}}, ~ v_{E_j}^{\bullet}Mv_{\lambda}=(Mv_{\lambda})^{\bullet}v_{E_j},~ j=1,\ldots,k \big\}
\end{eqnarray}
otherwise, (i.e., when $\bullet = *$ and d is even). Then by using~\eqref{eq:funch}-\eqref{constraint:Kodd} in~\eqref{eq:palcorodd} or~\eqref{eq:palcoreven}, we obtain
\begin{equation}\label{eq:partreform}
    \eta^{{\rm pal}_\bullet}(G,\lambda)^2=\inf\left\{ h(v_{A_0},\ldots,v_{E_k})\; : \; (v_{A_0},\ldots,v_{E_k})\in \mathcal{K} \right\}.
\end{equation}
Our idea is to use the strategy suggested in~\cite{MR3194659} and reformulate the optimization problem in~\eqref{eq:partreform} equivalently in terms of minimizing the Rayleigh quotient of some Hermitian matrix with respect to certain constraints involving Hermitian or symmetric matrices. This can be achieved using the following lemma, proof of which is analogous to~\cite[Lemma 3.1]{MR3335496} and therefore skipped. 
\begin{lemma}\label{pal:normchange}
    Let $G(z)$ be a $\bullet$-palindromic RMP of the form~\eqref{rmatrix} and $\lambda \in \mathbb{C} \setminus\{0\}$ satisfying~\eqref{assump}. Suppose that $M:=(G(\lambda))^{-1}$ exists and $m:=\left\lfloor\frac{d-1}{2}\right\rfloor $. Then
    \begin{equation}
        (\eta^{\mathrm{pal\bullet}}(G, \lambda))^{2}=\inf \left\{g(v_{A_0}, \ldots, v_{E_k}) ~:~(v_{A_0}, \ldots, v_{E_k}) \in \mathcal{K}\right\},
    \end{equation}
    where
    \begin{equation}
        g(v_{A_0}, \ldots, v_{E_k}):= 
        \begin{cases}
            \sum_{j=0}^{m} \frac{2\left(\|v_{j}\|^{2}+|\lambda|^{d-2 j}\|v_{d-j}\|^{2}\right)}{\left(1+|\lambda|^{d-2 j}\right)\|M v_{\lambda}\|^{2}}+\sum_{i=1}^k\frac{\|v_{E_i}\|^2}{\|Mv_{\lambda}\|^2} & \text { if } d \text { is odd } \\
            \sum_{j=0}^{m} \frac{2\left(\|v_{j}\|^{2}+|\lambda|^{d-2 j}\|v_{d-j}\|^{2}\right)}{\left(1+|\lambda|^{d-2 j}\right)\|M v_{\lambda}\|^{2}}+\frac{\|v_{\frac{d}{2}}\|^{2}}{\|M v_{\lambda}\|^{2}}+\sum_{i=1}^k\frac{\|v_{E_i}\|^2}{\|Mv_{\lambda}\|^2} \quad & \text { if } d \text { is even }
        \end{cases}
    \end{equation}
    and $\mathcal{K}$ is as defined in~\eqref{constraint:Keven} and~\eqref{constraint:Kodd}.
\end{lemma}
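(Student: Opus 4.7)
The plan is to prove the claimed identity by a two-sided inequality, using~\eqref{eq:partreform} to identify $(\eta^{\mathrm{pal}_\bullet}(G,\lambda))^2$ with $\inf_{v\in\mathcal{K}} h(v)$, where $h$ is the function from~\eqref{eq:funch}. Because $h$ and $g$ agree on the contributions coming from the $v_{E_j}$'s and (when $d$ is even) from $v_{A_{d/2}}$, the analysis reduces, for each $p=0,\ldots,m$, to comparing the pair contribution $2\max\{\|v_{A_p}\|^2,\|v_{A_{d-p}}\|^2\}$ with $2(\|v_{A_p}\|^2+|\lambda|^{d-2p}\|v_{A_{d-p}}\|^2)/(1+|\lambda|^{d-2p})$.

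The easy direction $\inf_{v\in\mathcal{K}}g(v)\leq\inf_{v\in\mathcal{K}}h(v)$ is the pointwise inequality $g(v)\leq h(v)$: for $a,b\geq 0$ and $t\geq 0$, the weighted average $(a+tb)/(1+t)$ is a convex combination of $a$ and $b$ and hence at most $\max\{a,b\}$. Apply this with $a=\|v_{A_p}\|^2$, $b=\|v_{A_{d-p}}\|^2$, $t=|\lambda|^{d-2p}$ for each $p=0,\ldots,m$.

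For the reverse direction, I would follow the construction of~\cite[Lemma 3.1]{MR3335496}: given any $v\in\mathcal{K}$, build a modified tuple $\tilde v\in\mathcal{K}$ satisfying $h(\tilde v)\leq g(v)$. For each $p\leq m$, set $V_p:=\lambda^p v_{A_p}+\lambda^{d-p}v_{A_{d-p}}$ and replace $(v_{A_p},v_{A_{d-p}})$ by $(\tilde v_{A_p},\tilde v_{A_{d-p}})=(c_p V_p,\,c'_p V_p)$ for scalars $c_p,c'_p\in\C$ chosen to enforce: (i) sum-preservation $\lambda^p c_p+\lambda^{d-p}c'_p=1$, so that $\tilde v_\lambda=v_\lambda\neq 0$; (ii) the compatibility $\tilde v_{A_p}^{\bullet} M v_\lambda=(Mv_\lambda)^{\bullet}\tilde v_{A_{d-p}}$, which amounts to a single scalar linear relation between $c_p$ and $c'_p$ (involving $\bar c_p$ in the $*$-case); and (iii) $|c_p|=|c'_p|$, so that the max in $h(\tilde v)$ collapses to the common value $|c_p|^2\|V_p\|^2$. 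The vectors $v_{E_j}$ and (if $d$ is even) $v_{A_{d/2}}$ are left untouched, since they contribute identically to $h$ and $g$. A weighted Cauchy--Schwarz estimate
\[
\|V_p\|^2=\|\lambda^p v_{A_p}+\lambda^{d-p}v_{A_{d-p}}\|^2\leq (1+|\lambda|^{d-2p})\,\bigl(|\lambda|^{2p}\|v_{A_p}\|^2+|\lambda|^d\|v_{A_{d-p}}\|^2\bigr),
\]
combined with the identity $(|\lambda|^p+|\lambda|^{d-p})^2=|\lambda|^{2p}(1+|\lambda|^{d-2p})^2$, will translate into the $p$-wise bound $|c_p|^2\|V_p\|^2\leq (\|v_{A_p}\|^2+|\lambda|^{d-2p}\|v_{A_{d-p}}\|^2)/(1+|\lambda|^{d-2p})$, i.e.\ the contribution of the $p$-th pair to $g(v)$. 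Summing over $p=0,\ldots,m$ (and adding the unchanged contributions from $v_{A_{d/2}}$ and the $v_{E_j}$'s) yields $h(\tilde v)\leq g(v)$, and taking the infimum over $v\in\mathcal{K}$ gives the desired inequality.

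The main obstacle is to check that (i)--(iii) can be imposed simultaneously for both $\bullet=*$ and $\bullet=T$: this reduces to solvability of a small scalar linear system whose determinant is a nonzero multiple of $|\lambda|^{2p}-|\lambda|^{2(d-p)}$ in the $*$-case and of $\lambda^{2p}-\lambda^{2(d-p)}$ in the $T$-case. These determinants vanish exactly at the parameter values ($|\lambda|=1$ for $*$-palindromic, and $\lambda=\pm 1$ for $T$-palindromic) at which the paper has already noted the structured and unstructured backward errors to coincide; those degenerate values can be handled by a direct limiting or continuity argument.
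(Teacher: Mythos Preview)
Your overall strategy matches what the paper does: it states that the proof is ``analogous to \cite[Lemma~3.1]{MR3335496}'' and omits it, so the two-sided inequality $\inf_{\mathcal K} g \le \inf_{\mathcal K} h \le \inf_{\mathcal K} g$ built on the reformulation~\eqref{eq:partreform} is exactly the right scaffolding, and your pointwise inequality $g\le h$ (weighted-average $\le$ max) is correct.

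The gap is in the reverse direction. By taking $\tilde v_{A_p}=c_pV_p$ and $\tilde v_{A_{d-p}}=c'_pV_p$ colinear with $V_p$, conditions (i) and (ii) already pin down $c_p$ uniquely whenever the relevant scalar $\zeta:=(Mv_\lambda)^\bullet V_p$ is nonzero: in the $*$-case (ii) reads $\bar c_p\bar\zeta=c'_p\zeta$, so $|c'_p|=|c_p|$ is automatic and (i) becomes $\lambda^p c_p+\mu\,\bar c_p=1$ with $\mu:=\lambda^{d-p}\bar\zeta/\zeta$, whose unique solution is $c_p=(\bar\lambda^p-\mu)/(|\lambda|^{2p}-|\lambda|^{2(d-p)})$; in the $T$-case (ii) forces $c_p=c'_p=1/(\lambda^p+\lambda^{d-p})$. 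In neither case do you get to \emph{choose} $|c_p|$. Your Cauchy--Schwarz estimate gives $\|V_p\|^2\le (|\lambda|^p+|\lambda|^{d-p})^2\cdot\frac{\|v_{A_p}\|^2+|\lambda|^{d-2p}\|v_{A_{d-p}}\|^2}{1+|\lambda|^{d-2p}}$, so the ``$p$-wise bound'' $|c_p|^2\|V_p\|^2\le\frac{\|v_{A_p}\|^2+|\lambda|^{d-2p}\|v_{A_{d-p}}\|^2}{1+|\lambda|^{d-2p}}$ follows only if $|c_p|\le 1/(|\lambda|^p+|\lambda|^{d-p})$. But for the $T$-case this would require $|\lambda^p+\lambda^{d-p}|\ge |\lambda|^p+|\lambda|^{d-p}$, the reverse triangle inequality; and in the $*$-case it would require $|\bar\lambda^p-\mu|\le \big||\lambda|^p-|\lambda|^{d-p}\big|$, which fails for generic $\zeta$ (equivalently, for generic $v\in\mathcal K$). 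A concrete failure: $d=1$, $p=0$, $\lambda=2i$, $\zeta$ real, gives $|c_0|=\sqrt5/3>1/3$. So the colinear replacement does not, in general, produce $\tilde v\in\mathcal K$ with $h(\tilde v)\le g(v)$.

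The ``main obstacle'' you flag (solvability of (i)--(iii)) is therefore not the real difficulty; the system is solvable, but the solution is too large. To repair the argument you must allow $\tilde v_{A_p},\tilde v_{A_{d-p}}$ that are \emph{not} both multiples of $V_p$ (only their $\lambda$-weighted sum need equal $V_p$), which restores enough freedom to meet the constraint in $\mathcal K$ while keeping both norms at $\|V_p\|/(|\lambda|^p+|\lambda|^{d-p})$; this is what the construction in \cite[Lemma~3.1]{MR3335496} actually does.
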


By following the steps of Section~\ref{sec:herm} for Hermitian structure, the function $g(v_{A_0}, \ldots, v_{E_k})$ in Lemma~\ref{pal:normchange} can be written as a Rayleigh quotient of some Hermitian matrix and the conditions in the set $\mathcal K$ can also be simplified as some Hermitian or symmetric constraints depending on whether $\bullet=*$ or $\bullet =T$.

For $m=\left\lfloor\frac{d-1}{2}\right\rfloor$, define $\gamma_{j1}:=\sqrt{\frac{2}{1+|\lambda|^{d-2 j}}}, \gamma_{j2}:=\sqrt{\frac{2|\lambda|^{d-2j}}{1+|\lambda|^{d-2j}}}$,  $j=0, \ldots, m$,

\begin{equation}\label{Gamma}
    \Gamma:= 
    \begin{cases}
        \diag\left(\gamma_{01}, \ldots, \gamma_{m 1}, \gamma_{m 2}, \ldots, \gamma_{02},1,\ldots,1\right) \otimes I_{n} & \text { if } d \text { is odd } \\
        \diag\left(\gamma_{01}, \ldots, \gamma_{m 1}, 1, \gamma_{m 2}, \ldots, \gamma_{02},1,\ldots,1\right) \otimes I_{n} & \text { if } d \text { is even }
    \end{cases},
\end{equation}
and $\Lambda:=\left[1, \lambda, \ldots, \lambda^{m},w_1(\lambda),\ldots,w_k{\lambda}\right] \in \mathbb{C}^{1 \times(d+k+1)}$. Then we have
\begin{equation}\label{matrix:palv}
    g\left(v_{A_0}, \ldots, v_{E_k}\right)=\frac{v^{*} \Gamma^{2} v}{v^{*} \widetilde{J} v},
\end{equation}
where  $ \widetilde{J}:=\left(\Lambda^{*} \Lambda\right) \otimes\left(M^{*} M\right)$ and $ v=\left[v_{A_0}^{T}, \ldots,v_{A_d}^{T}, v_{E_1}^T,\ldots,v_{E_k}^{T}\right]^{T}$.  Further 
$v^{*} \widetilde{J} v=\left\|M v_{\lambda}\right\|^{2} \neq 0$ if and only if  $v_{\lambda} \neq 0 $, since $M$ is invertible.

Next we simplify the constraints in the set $\mathcal{K}$. Note that the constraint $v_{A_p}^{\bullet}Mv_{\lambda}=(Mv_{\lambda})^{\bullet}v_{A_{d-p}}$ for $p=0,\ldots,m$ can be equivalently written as 
\begin{equation}\label{matrix:palCi}
v^*\widetilde{C}_pv=0\quad  \text{and}\quad \widetilde{C}_p = \Lambda^{\bullet}e_{p+1}^{\bullet}\otimes M^{\bullet}-e_{d-p+1}\Lambda \otimes M.
\end{equation}
Similarly, for $j=1,\ldots,k$ the constraint  $v_{E_j}^{\bullet}Mv_{\lambda}=(Mv_{\lambda})^{\bullet}v_{E_{j}}$  is equivalent to  
\begin{equation}\label{matrix:palEH}
v^*\widetilde{H}_{d+j}v=0~\text{and}\quad \widetilde{H}_{d+j} = i(\Lambda^*e_{d+j+1}^*\otimes M^*-e_{d+j+1}\Lambda \otimes M).
\end{equation}
Note that the condition $v_{E_j}^{\bullet}Mv_{\lambda}=(Mv_{\lambda})^{\bullet}v_{E_{j}}$ trivially holds if $\bullet = T$. 
When $d$ is even, the constraint  $v_{A_{\frac{d}{2}}}^*Mv_{\lambda} =(Mv_{\lambda})^*v_{A_{\frac{d}{2}}}$ reduces to 
\begin{equation}\label{matrix:palCd2}
v^*\widetilde{C}_{\frac{d}{2}}v=0 \quad \text{and}\quad \widetilde{C}_{\frac{d}{2}}=i(\Lambda^*e_{\frac{d}{2}+1}^*\otimes M^*-e_{\frac{d}{2}+1}\Lambda \otimes M).
\end{equation}
In view of~\eqref{matrix:palCi}-\eqref{matrix:palCd2}, the set $\mathcal K$ becomes 
\begin{equation}
\mathcal{K}=\left\{ (v_{A_0},\ldots, v_{E_k})\in (\C^n)^{d+k+1} :~ v_{\lambda}\neq 0, \; v^T\widetilde{C}_pv=0, \; p=0,\ldots, m \right\}
\end{equation}
when $\bullet = T$, 
and
\begin{equation}
\mathcal{K}=\begin{cases}\left\{ (v_{A_0},\ldots, v_{E_k})\in (\C^n)^{d+k+1}  :~v_{\lambda}\neq 0\; , \; v^*\widetilde{C}_pv=0,\; p=0,\ldots,m+1,\; v^*\widetilde{H}_{d+j}v=0,~ j=1,\ldots,k \right\}\\ \hspace{11cm} \text{if d is even}\\
\left\{ (v_{A_0},\ldots, v_{E_k})\in (\C^n)^{d+k+1}  :~v_{\lambda}\neq 0, \; v^*\widetilde{C}_pv=0,~p=0,\ldots,m, v^*\widetilde{H}_{d+j}v=0,~ j=1,\ldots,k \right\} \\ \hspace{11cm} \text{if d is odd}
\end{cases}
\end{equation}
when $\bullet = *$.
To make the constraints $\widetilde C_p$, for $p=0,\ldots,m$ in $\mathcal K$ Hermitian (when $\bullet =*$) and symmetric (when $\bullet=T$), we set  $u:=\Gamma v$ and define for $p=0,\ldots,m$, 
\begin{eqnarray}\label{matrix:palHS}
    H_{p}:=\Gamma^{-1} (\widetilde{C}_{p}+\widetilde{C}_p^*) \Gamma^{-1},\quad  H_{d-p}:=i\Gamma^{-1} (\widetilde{C}_{p}-\widetilde{C}_p^*) \Gamma^{-1}, \quad 
    S_p=\Gamma^{-1}(\widetilde{C}_p+\widetilde{C}_p^T)\Gamma^{-1},
\end{eqnarray}
and set 
\begin{equation}\label{matrix:palJ}
    J:=\Gamma^{-1} \widetilde{J} \Gamma^{-1}, \quad H_{\frac{d}{2}}:=\Gamma^{-1} \widetilde{C}_{\frac{d}{2}} \Gamma^{-1}, \quad \text{and}\quad
    H_{d+j}:=\Gamma^{-1}\widetilde H_{d+j} \Gamma^{-1},~\text{for}~ j=1,\ldots,k.
\end{equation}
Then it is easy to observe that for $p=0,\ldots,m$
\begin{eqnarray}\label{matrix:C-H}
v^{*}\widetilde{C}_pv=0 & \iff & v^{*}(\widetilde{C}_p+\widetilde{C}_p^{*})v=0 \quad \text{and} \quad  v^{*}(\widetilde{C}_p-\widetilde{C}_p^{*})v=0 \nonumber \\
        & \iff &u^{*}H_pu=0 \quad \text{and} \quad  u^{*}H_{d-p}u=0,
\end{eqnarray}
and 
\begin{equation}\label{matrix:C-S}
v^{T}\widetilde{C}_pv=0 \iff  v^{T}(\widetilde{C}_p+\widetilde{C}_p^{T})v=0  \iff u^{T}S_pu=0.
\end{equation}
Thus applying~\eqref{matrix:palv} and~\eqref{matrix:C-H}-\eqref{matrix:C-S} in Lemma~\ref{pal:normchange} yields that 
\begin{equation}\label{pal*:error1}
    (\eta^{{\rm pal}_*}(G,\lambda))^2=\left(\sup\left\{ \frac{u^*Ju}{u^*u} :~ u \in \C^{n(d+k+1)}\setminus\{0\},~ u^*H_ju=0,~ j=0,\ldots, d+k  \right\}\right)^{-1},
\end{equation}
when $\bullet = *$, and 
\begin{equation}\label{palT:error1}
    (\eta^{{\rm pal}_T}(G,\lambda))^2=\left(\sup\left\{ \frac{u^*Ju}{u^*u} :~ u \in \C^{n(d+k+1)}\setminus\{0\},~ u^TS_ju=0,~ j=0,\ldots, m \right\}\right)^{-1},
\end{equation}
when $\bullet = T$. We note that in~\eqref{pal*:error1} and~\eqref{palT:error1}, the condition  $v_{\lambda}\neq0$, or, equivalently $u^*{J}u\neq 0$ was dropped because the backward error 
$\eta^{{\rm pal}_\bullet}(G,\lambda)$ is finite, and thus the optimal in~\eqref{pal*:error1} and~\eqref{palT:error1} will not be attained at the vectors $u$ for which $u^*{J}u= 0$.  Thus allowing the vectors $u$ with  $u^*{J}u= 0$ would not change anything. 

On applying Theorem~\ref{theorem:optimize2} to~\eqref{pal*:error1}, we  derive a computable formula for the $*$-palindromic backward error $\eta^{{\rm pal}_*}(G, \lambda)$ in the following result,  

\begin{theorem}\label{thm:mainpalin}
    Let $G(z)$ be a $*$-palindromic RMP of the form~\eqref{rmatrix} and $\lambda \in \mathbb{C} \backslash\{0\}$ satisfying~\eqref{assump} and $|\lambda| \neq 1$. Suppose that $M:=(G(\lambda))^{-1} $ exists. Then for $J$ and $H_{j}$ for $j=0, \ldots, d+k$, as defined in~ \eqref{matrix:palHS} and~\eqref{matrix:palJ}, we have that 
\begin{equation*}
\widehat \lambda_{\max }:=\min _{t_{0}, \ldots, t_{d+k} \in \mathbb{R}} \lambda_{\max }\left(J+t_{0} H_{0}+\cdots+t_{d+k} H_{d+k}\right)
\end{equation*}
    is attained for some $\left(\hat{t}_{0}, \ldots, \hat{t}_{d+k}\right) \in \mathbb{R}^{d+k+1} $, and $\eta^{{\rm pal}_*}(G, \lambda)\geq\frac{1}{\sqrt{\widehat \lambda_{\max }}}$. 
Further,  if $\widehat \lambda_{\max }$ is a simple eigenvalue of $J+\hat t_{0} H_{0}+\cdots+\hat t_{d+k} H_{d+k}$, then 
\begin{equation}\label{eq:palerrorineq}
        \eta^{{\rm pal}_*}(G, \lambda)=\frac{1}{\sqrt{\widehat \lambda_{\max }}}.
\end{equation}
\end{theorem}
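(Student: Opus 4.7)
The reformulation \eqref{pal*:error1} has already cast $(\eta^{{\rm pal}_*}(G,\lambda))^{-2}$ as the supremum of the Rayleigh quotient $u^{*}Ju/u^{*}u$ over $u\in \C^{n(d+k+1)}\setminus\{0\}$ subject to the Hermitian constraints $u^{*}H_{j}u=0$, $j=0,\ldots,d+k$. This is precisely the format of problem \eqref{opt:herm}, so my plan is to deduce the theorem as a direct application of Theorem~\ref{theorem:optimize2}: its part (i) yields the existence of the minimizer and the inequality $\eta^{{\rm pal}_*}(G,\lambda)\geq 1/\sqrt{\widehat\lambda_{\max}}$, and its part (ii) yields the equality \eqref{eq:palerrorineq} under the simplicity assumption on $\widehat\lambda_{\max}$. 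The only remaining task is to verify the hypothesis of Theorem~\ref{theorem:optimize2}, namely that every nonzero real linear combination
\[
H \;:=\; \sum_{j=0}^{d+k}\alpha_{j}H_{j}, \qquad (\alpha_{0},\ldots,\alpha_{d+k})\in \R^{d+k+1}\setminus\{0\},
\]
is indefinite.

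Since the diagonal matrix $\Gamma$ in \eqref{Gamma} is positive definite, $H$ is semidefinite if and only if the unscaled matrix $\widetilde H := \Gamma H \Gamma$ is semidefinite, so I would replace $H$ by $\widetilde H$. Expanding $\widetilde H$ using \eqref{matrix:palCi}--\eqref{matrix:palCd2}, each summand is a combination of the rank-two blocks $\Lambda^{*}e_{q}^{*}\otimes M^{*}$ and $e_{q}\Lambda\otimes M$. Following the strategy employed in the proof of Theorem~\ref{thm:Herm}, I would introduce a bidiagonal matrix $Q\in \C^{(d+k+1)\times (d+k+1)}$ built from $\lambda$ and the (nonzero) values $w_{1}(\lambda),\ldots,w_{k}(\lambda)$ so that $\Lambda Q = e_{1}^{T}$, and then perform the congruence $(Q\otimes I_{n})^{*}\widetilde H (Q\otimes I_{n})$. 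Because $\Lambda Q = e_{1}^{T}$, each term collapses into a block occupying only the first block-row and the first block-column, together with a possibly nonzero $(1,1)$-block, producing an arrow-head block matrix whose off-diagonal blocks are scalar multiples of $M$ or $M^{*}$, the scalars being real-linear combinations of the $\alpha_{j}$ twisted by $\lambda$, $\overline\lambda$ and $w_{j}(\lambda)$.

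The main obstacle is the linear-algebraic book-keeping at this stage. Semidefiniteness of the arrow-head matrix, together with the invertibility of $M$, forces every off-diagonal block to vanish; this produces a linear system in the $\alpha_{j}$ whose coefficients involve $\lambda$, $\overline\lambda$ and the $w_{j}(\lambda)$. The palindromic structure couples the pairs $(\alpha_{p},\alpha_{d-p})$ through factors proportional to $1$ and $|\lambda|^{d-2p}$, so the resulting equations decouple only when $|\lambda|^{d-2p}\neq 1$ for all relevant $p$; this is exactly where the hypothesis $|\lambda|\neq 1$ enters, playing the role that $\lambda\notin\R$ plays in the Hermitian case treated in Theorem~\ref{thm:Herm}. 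Once this decoupling is exploited, one propagates the vanishing through the indices to conclude $\alpha_{0}=\cdots=\alpha_{d+k}=0$, contradicting the choice of a nonzero tuple.

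With indefiniteness of every nonzero combination $H$ in hand, part (i) of Theorem~\ref{theorem:optimize2} applied to \eqref{pal*:error1} immediately gives the existence of $(\hat t_{0},\ldots,\hat t_{d+k})\in \R^{d+k+1}$ attaining $\widehat\lambda_{\max}$ and the lower bound $\eta^{{\rm pal}_*}(G,\lambda)\geq 1/\sqrt{\widehat\lambda_{\max}}$. If additionally $\widehat\lambda_{\max}$ is a simple eigenvalue of $J+\hat t_{0}H_{0}+\cdots+\hat t_{d+k}H_{d+k}$, then part (ii) of Theorem~\ref{theorem:optimize2} provides an eigenvector $u$ that satisfies all the Hermitian constraints and attains the supremum, which upgrades the inequality to the equality \eqref{eq:palerrorineq} and completes the proof.
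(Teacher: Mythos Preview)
Your proposal is correct and follows essentially the same approach as the paper: reduce to Theorem~\ref{theorem:optimize2} via \eqref{pal*:error1}, and verify the indefiniteness hypothesis by passing to $\Gamma H\Gamma$, applying the bidiagonal congruence $Q$ with $\Lambda Q=e_1^T$, and using invertibility of $M$ together with $|\lambda|\neq 1$ to force the coefficient vector to vanish. One small remark: in the paper the decisive relation is simply $a_0=|\lambda|^2 a_0$ (obtained from the equations $a_1=0$ and $a_d=0$, with small case distinctions for $d=1,2$), rather than a family of conditions $|\lambda|^{d-2p}\neq 1$ as you anticipate, but this does not affect the validity of your plan.
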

\proof
In view of~\eqref{pal*:error1}, the proof follows from Theorem 
~\ref{theorem:optimize2} if we show that any nonzero linear combination 
$\alpha_0 H_0 +\cdots+\alpha_{d+k}H_{d+k}$, $(\alpha_0,\ldots,\alpha_{d+k}) \in \R^{d+k+1} \setminus \{0\}$ is indefinite, or equivalently, $\Gamma\left(\alpha_0 H_0 +\cdots+\alpha_{d+k}H_{d+k}\right)\Gamma$ is indefinite, where $\Gamma$ is defined in~\eqref{Gamma}. On the contrary, suppose that 
$H:=\sum_{j=0}^{d+k} \hat \alpha_j \Gamma H_j\Gamma $ is semidefinite for some 
$(\hat \alpha_0,\ldots,\hat \alpha_{d+k}) \in \R^{d+k+1}$. Then we can write $H$ as
\begin{equation}
        H = i(\Lambda^*{\widehat \alpha}^T \otimes M^* - \widehat \alpha \Lambda \otimes M),
\end{equation}
where 
\begin{equation}
    \widehat \alpha :=\begin{cases}
        \big[\hat \alpha_0-i \hat \alpha_d, \ldots, \hat\alpha_m-i\hat\alpha_{d-m}, -(\hat\alpha_m+i\hat\alpha_{d-m}),\ldots,-(\hat\alpha_0+i\hat\alpha_d),\hat\alpha_{d+1},\ldots,\hat\alpha_{d+k} \big]^T \;& \text{if $d$ is odd}\\   
        \big[\hat\alpha_0-i\hat\alpha_d, \ldots, \hat\alpha_m-i\hat\alpha_{d-m},-i\hat\alpha_{\frac{d}{2}}, -(\hat\alpha_m+i\hat\alpha_{d-m}),\ldots,-(\hat\alpha_0+i\hat\alpha_d),\hat\alpha_{d+1},\ldots,\hat\alpha_{d+k} \big]^T\; & \text{if $d$ is even}
 \end{cases}.
\end{equation}
By setting, 
\begin{equation}\label{eq:profpal1}
 Q:=\begin{bmatrix} 
1 & -\lambda &  &  &  &  &  &\\
 & \ddots & \ddots &  &  &  &  &  \\
 &  &  &  -\lambda &  &  &  &  \\
 &  &  & 1 & -\frac{w_1(\lambda)}{\lambda^d} &  &  &  \\
 &  &  &  & 1 & -\frac{w_2(\lambda)}{w_1(\lambda)} &  &  \\
 &  &  &  &  & \ddots & \ddots & \\
 &  &  &  &  &  &  & -\frac{w_k(\lambda)}{w_{k-1}(\lambda)} \\
 &  &  &  &  &  &  &   1 \\
\end{bmatrix}
\;\text{and}\; a:=Q^*\widehat \alpha=\mat{c}a_0\\\vdots \\ a_{d+k}\rix,
\end{equation}
we get $\Lambda Q = e_1^T$ and 
\begin{equation*}
    (Q\otimes I_n)^*H(Q \otimes I_n)  = (e_1a^* \otimes M^* + a e_1^T \otimes M)= \begin{bmatrix}
    a_0M-\overline{a_0}M^* & \overline{a_1}M^* & \cdots & \overline{a_{d+k}M^*}\\
    a_1M &0 &\cdots &0 \\
    \vdots &\vdots & &\vdots \\
    a_{d+k}M &0 & \cdots&0 \\
    \end{bmatrix}.
\end{equation*}
This implies that $a_{1}=\cdots=a_{d+k}=0$, since $H$ is semidefinite and $M$ is invertible, and thus $a=Q^{*} \widehat \alpha=a_{0} e_{1}$. If $d \geq 3$, then  from~\eqref{eq:profpal1}, we have 
\begin{eqnarray*}
a_{1}=0 ~\implies ~  \hat \alpha_{1}-i \hat\alpha_{d-1}=\overline{\lambda}\left(\hat\alpha_{0}-i \hat\alpha_{d}\right) \quad \text{and} \quad 
\overline{a}_{d}=0 ~ \implies ~  \hat\alpha_{0}-i \hat\alpha_{d}=\lambda\left(\hat\alpha_{1}-i \hat\alpha_{d-1}\right),
\end{eqnarray*}
which implies that 
\[
a_{0}=\hat\alpha_{0}-i \hat\alpha_{d}=\lambda\left(\hat\alpha_{1}-i \hat\alpha_{d-1}\right)=\lambda \overline{\lambda}\left(\hat\alpha_{0}-i\hat \alpha_{d}\right)=\lambda \overline{\lambda} a_{0}.
\]
Similarly, when $d=1$, we have 
\[
a_{1}=0 ~\implies~ \hat\alpha_{0}+i \hat\alpha_{1}=-\overline{\lambda}\left(\hat\alpha_{0}-i \hat\alpha_{1}\right) \quad \text { and } \quad \hat\alpha_{0}-i \hat\alpha_{1}=-\lambda\left(\hat\alpha_{0}+i \hat\alpha_{1}\right)
\]
so that $a_{0}=\hat\alpha_{0}-i \hat\alpha_{1}=-\lambda\left(\hat\alpha_{0}+i \hat\alpha_{1}\right)=\lambda \overline{\lambda}\left(\hat\alpha_{0}-i \hat\alpha_{1}\right)=\lambda \overline{\lambda} a_{0}$. 
Finally, when $d=2$, we have 
\begin{eqnarray*}
a_{1}=0 ~\implies~ i \hat\alpha_{1}=-\overline{\lambda}\left(\hat\alpha_{0}-i \hat\alpha_{2}\right) \quad \text{and} \quad 
\overline{a}_{2}=0~\implies~ \hat\alpha_{0}-i \hat\alpha_{2}=-i \lambda \hat\alpha_{1}
\end{eqnarray*}
so that $a_{0}=\hat\alpha_{0}-i \hat\alpha_{2}=-i \lambda \hat\alpha_{1}=\lambda \overline{\lambda}\left(\hat\alpha_{0}-i \hat\alpha_{2}\right)=\lambda \overline{\lambda} a_{0}$.
Thus for any value of $d$, we get $a_0=\lambda \overline{\lambda}a_0$, which implies that $a_0=0$, since $|\lambda|\neq 1$. This implies that $\widehat \alpha = (Q^*)^{-1}a = 0$ and this gives $\hat\alpha_0=\cdots = \hat \alpha_{d+k}=0$, which is a contradiction. 
\eproof

For $T$-palindromic structure, we apply Theorem~\ref{theorem:optimize1} to~\eqref{palT:error1}, to obtain a computable formula for the $T$-palindromic backward error $\eta^{{\rm pal}_T}(G, \lambda)$. More precisely, we have the following result. 

\begin{theorem}\label{thm:sberTpal}
Let $G(z)$ be a T-palindromic RMP of the form~\eqref{rmatrix} and $\lambda \in \C\setminus \{0,\pm 1\}$ satisfying~\eqref{assump}.  Suppose that $M:=(G(\lambda))^{-1} $ exists. Define $m=\lfloor{\frac{d-1}{2}}\rfloor$ and let 
$\psi: \R^{2m+1}\mapsto \R$ be defined by $(t_0,\ldots,t_{2m})\mapsto \lambda_2(F(t_0,\ldots,t_{2m}))$, where $F(t_0,\ldots,t_{2m})$  is as defined in \eqref{eq:defF} for matrices $J,S_0,\ldots,S_m$ in~\eqref{matrix:palJ} and~\eqref{matrix:palHS}.
 Then 
\begin{itemize}
\item The function $\psi$ has a global minimum
\[
\widehat \lambda_2 := \min_{(t_0,\ldots,t_{2m})\in \R^{2m+1}} \psi (t_0,\ldots,t_{2m}) \quad \text{and}\quad 
\eta^{{\rm pal_T}}(G,\lambda) \geq \frac{1}{\sqrt{\widehat \lambda_2}}.
\]
\item Moreover, if the minimum $\widehat \lambda_2$ of $\psi$  is attained 
at $(\hat t_0,\ldots,\hat t_{2m}) \in \R^{2m+1}$ and is a simple eigenvalue of $F(\hat t_0,\ldots,\hat t_{2m}) $, then 
\[
\eta^{{\rm pal_T}}(G,\lambda) = \frac{1}{\sqrt{\widehat \lambda_2}}.
\]
\end{itemize}
\end{theorem}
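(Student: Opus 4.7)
The plan is to mimic the T-even/T-odd strategy of Section~\ref{sec:eve/odd}. The heavy lifting has already been done: the reformulation~\eqref{palT:error1} expresses $(\eta^{{\rm pal}_T}(G,\lambda))^{-2}$ as $m_{js_0\ldots s_m}(J,S_0,\ldots,S_m)$, which is the optimization problem~\eqref{opt:sym} for the matrices $J$ and $S_0,\ldots,S_m$ defined in~\eqref{matrix:palJ}--\eqref{matrix:palHS}. So the whole task reduces to applying Theorem~\ref{theorem:optimize1} to this reformulation, which immediately produces both the inequality and (under the simple-eigenvalue hypothesis) the equality.

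The one nontrivial step is to verify the rank hypothesis of Theorem~\ref{theorem:optimize1}(1). Writing $\widetilde C_p = \Lambda^T e_{p+1}^T\otimes M^T - e_{d-p+1}\Lambda\otimes M$ from~\eqref{matrix:palCi}, we get
\[
\widetilde C_p+\widetilde C_p^T \;=\; \Lambda^T\Omega_p\otimes M^T + \Omega_p^T\Lambda\otimes M, \quad \Omega_p:=e_{p+1}^T - e_{d-p+1}^T,
\]
for $p=0,\ldots,m$. Pulling out the invertible factors $\Gamma^{-1}$, the matrix $f(t_0,\ldots,t_{2m})$ from Theorem~\ref{theorem:optimize1} has rank equal to that of
\[
\Lambda^T\Omega\otimes M^T + \Omega^T\Lambda\otimes M, \quad \text{where } \Omega:=\sum_{j=0}^{m-1}(t_{2j}+it_{2j+1})\Omega_j + t_{2m}\Omega_m.
\]

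The two linear independence checks needed to apply Lemma~\ref{lem:B} are straightforward. First, for $m=\lfloor(d-1)/2\rfloor$ the index sets $\{p+1:0\le p\le m\}$ and $\{d-p+1:0\le p\le m\}$ are disjoint, so the $\Omega_p$ are linearly independent, hence $\Omega\ne 0$ whenever $(t_0,\ldots,t_{2m})\ne 0$. Second, $\Omega$ is supported on positions $1,\ldots,d+1$ and vanishes on positions $d+2,\ldots,d+k+1$, whereas the last $k$ entries of $\Lambda$ are $w_1(\lambda),\ldots,w_k(\lambda)$, all nonzero by assumption~\eqref{assump}; therefore $\Omega$ and $\Lambda$ are linearly independent. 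Lemma~\ref{lem:B} then gives ${\rm rank}(f(t_0,\ldots,t_{2m}))=2n\ge 2$, as required.

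With the rank condition in hand, part (1) of Theorem~\ref{theorem:optimize1} gives both the existence of a global minimum $\widehat\lambda_2$ of $\psi$ (the infimum is attained in a compact ball) and the inequality $m_{js_0\ldots s_m}(J,S_0,\ldots,S_m)\le \widehat\lambda_2$, which translates to $\eta^{{\rm pal}_T}(G,\lambda)\ge 1/\sqrt{\widehat\lambda_2}$. When $\widehat\lambda_2$ is a simple eigenvalue of $F(\hat t_0,\ldots,\hat t_{2m})$, part (2) of the same theorem upgrades this to equality, yielding $\eta^{{\rm pal}_T}(G,\lambda)=1/\sqrt{\widehat\lambda_2}$. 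The main (and only) obstacle is the rank verification above; everything else is a direct invocation of Theorem~\ref{theorem:optimize1} applied to~\eqref{palT:error1}.
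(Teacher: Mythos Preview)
Your proposal is correct and follows the same route as the paper: invoke the reformulation~\eqref{palT:error1}, verify the rank hypothesis of Theorem~\ref{theorem:optimize1} via Lemma~\ref{lem:B}, and read off the inequality and (under simplicity) the equality. The only deviation is in how you justify the linear independence of $\Omega$ and $\Lambda$: the paper appeals to $\lambda\notin\{0,\pm 1\}$ (the argument inherited from the pure matrix-polynomial case), whereas you observe that $\Omega$ vanishes on the last $k$ coordinates while $\Lambda$ has the nonzero entries $w_1(\lambda),\ldots,w_k(\lambda)$ there; your argument is cleaner in the genuine RMP setting ($k\ge 1$) and in fact does not use $\lambda\neq\pm 1$ for this step at all.
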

\proof
In view of~\eqref{palT:error1}, the proof follows from Theorem 
~\ref{theorem:optimize1} if we show that the  matrices $S_0,\ldots, S_m$ satisfy the rank condition, i.e., 
\[
\text{rank}\big((t_0+it_1)S_0+\cdots +(t_{2m-2}+it_{2m-1})S_{m-1}+ t_{2m}S_m\big)\geq 2.
\]
for any nonzero $(t_0,\ldots, t_{2m}) \in \R^{2m+1}$. To see this, let $(t_0,\ldots, t_{2m}) \in \R^{2m+1} \setminus \{0\}$ and consider 
\begin{eqnarray}\label{eq:prof11}
f(t_0,\ldots,t_{2m})&=&(t_0+it_1)S_0  +\cdots +(t_{2m-2}+it_{2m-1})S_{ m-1}+t_{2m}S_m \nonumber \\
&=&\Gamma^{-1}\Big(\Lambda^T\big((t_0+it_1)e_1^T+\cdots + (t_{2m-2}+it_{2m-1}) e_m^T +t_{2m}e_{m+1}^T- t_{2m}e_{d-m+1}^T \nonumber \\
&& - (t_{2m-2}+it_{2m-1}) e_{d-m+2}^T-\cdots -(t_0+it_1)e_{d+1}^T\big)\otimes M^T \nonumber \\
&&+ \big((t_0+it_1)e_1+\cdots + (t_{2m-2}+it_{2m-1}) e_m +t_{2m}e_{m+1}- t_{2m}e_{d-m+1} \nonumber \\
&& - (t_{2m-2}+it_{2m-1}) e_{d-m+2}-\cdots - 
(t_0+it_1)e_{d+1}\big) \Lambda \otimes M\Big)\Gamma^{-1} \nonumber \\
& =&\Gamma^{-1}\left(\Lambda^T\Omega_m \otimes M^T+\Omega_m^T\Lambda \otimes M
\right)\Gamma^{-1},
\end{eqnarray}
where 
\begin{eqnarray*}
\Omega_m &:=&(t_0+it_1)e_1^T+\cdots + (t_{2k-2}+it_{2k-1}) e_k^T +t_{2k}e_{k+1}^T- t_{2k}e_{m-k+1}^T \\
&&- (t_{2k-2}+it_{2k-1}) e_{m-k+2}^T-\cdots -(t_0+it_1)e_{m+1}^T.
\end{eqnarray*}
Thus applying Lemma~\ref{lem:B} in~\eqref{eq:prof11}, we have that $\text{rank}(f(t_0,\ldots,t_{2m}))\geq 2$, since 
$\Omega_m$ and $\Lambda$ are linearly independent, when $\lambda \not\in \{0,\pm 1\}$. 
\eproof

\begin{remark}{\rm \label{rem:optperpal}
A remark similar to Remark~\ref{rem:optpertherm} also holds for palindromic RMPs. When $\bullet = *$ and $\widehat \lambda_{\max}$ is a simple eigenvalue of $J+\sum_{j=0}^{d+k}\widehat t_j H_j$ in Theorem~\ref{thm:mainpalin}, then we first compute the corresponding eigenvector 
$u \in {\C^n}^{d+k+1}$ such that $u^*H_ju=0$ for all $j=0,\ldots,d+k$ and then the optimal perturbation may be obtained from~\cite[Theorem 2.1]{MehMS17}.

When $\bullet=T$ and  $\widehat \lambda_2$ is a simple eigenvalue of $F(\hat t_0,\ldots,t_{2m})$ in Theorem~\ref{thm:sberTpal}, then using~\cite[Lemma 3.4]{MR4404572}, we first compute a vector $u \in {\C^n}^{d+k+1}$ such that $u^TS_ju=0$ for all $j=0,m$ and thus the optimal perturbation may be computed from~\cite[Theorem 2.1]{MehMS17}. 
}
\end{remark}

\section{Structured vs unstructured eigenvalue backward errors}\label{sec:numeric}

In this section, we present some numerical experiments to illustrate the results of this paper by comparing eigenvalue backward errors of RMPs under structure-preserving and arbitrary perturbations. We consider  RMP $G(z)$ with Hermitian and $*$-palindromic structure and compute the eigenvalue backward errors $\eta^{{\rm Herm}}(G,\lambda)$ and $\eta^{{\rm pal}_*}(G,\lambda)$ of some $\lambda \in \mathbb{C} \setminus \{0\}$. In all cases we have used the software package CVX~\cite{GraBBBK08} in MATLAB to solve the associated optimization problem.

\begin{example}\label{ex:herm}{\rm(Hermitian RMP)~
Let $G(z)=A_0+zA_1+\frac{1}{z+1}E_1+\frac{1}{z+2}E_2$ be a $3 \times 3$ Hermitian rational matrix polynomial, where $A_0$, $A_1$, $E_1$ and $E_2$ are randomly generated Hermitian matrices. Eigenvalues of $G(z)$ are given by $-1.7389\pm 1.2830, -0.9013\pm 0.5012, -1.5356\pm 0.0210, 1.2546,-0.2999$, and $-1.8326$.  The Hermitian backward error (Theorem~\ref{thm:Herm}) for the point $\lambda=1.2546+0.5i$ which is close to the eigenvalue $1.2546$ is $0.2888$ while the unstructured backward error (Theorem~\ref{thm:unstrbacerrfor}) $0.0640$ is much smaller as expected.

The spectrum of a Hermitian polynomial $G(z)$ is symmetric to the real axis and this symmetry is preserved to the structure-preserving perturbations. However, this is not the case under arbitrary perturbations to $G(z)$. This is also depicted in~Figure~\ref{figure:herm}. The plot on the left side of Figure~\ref{figure:herm} illustrates the movement of the eigenvalue of the rational matrix $G(z)$ (marked with stars surrounded  by circles) under the homotopic perturbation $G(z)+t\Delta G(z)$ as $t$ moves from $0$ to $1$, 
where $\Delta G(z)=\Delta A_0+z\Delta A_1+\frac{1}{z+1}\Delta E_1+\frac{1}{z+2}E_2$ is the optimal Hermitian perturbation satisfying $\nrm{(\Delta A_0,\Delta A_1,\Delta E_1,\Delta E_2)}=0.2888$. 


On the other hand, the plot on the right side of Figure~\ref{figure:herm} illustrates the same effect to unstructured homotopic perturbations $G(z)+t\Delta G(z)$ as t varies from 0 to 1, where $\Delta G$ is the optimal unstructured perturbation with $\nrm{\Delta G}=0.0640$. It can be observed from the figure that the complex conjugate of $1.2546+0.5i$ is not an eigenvalue of $G(z)+\Delta G(z)$ as it is no more a Hermitian RMP.

\begin{figure}[H]
\centering
\includegraphics[width=\textwidth]{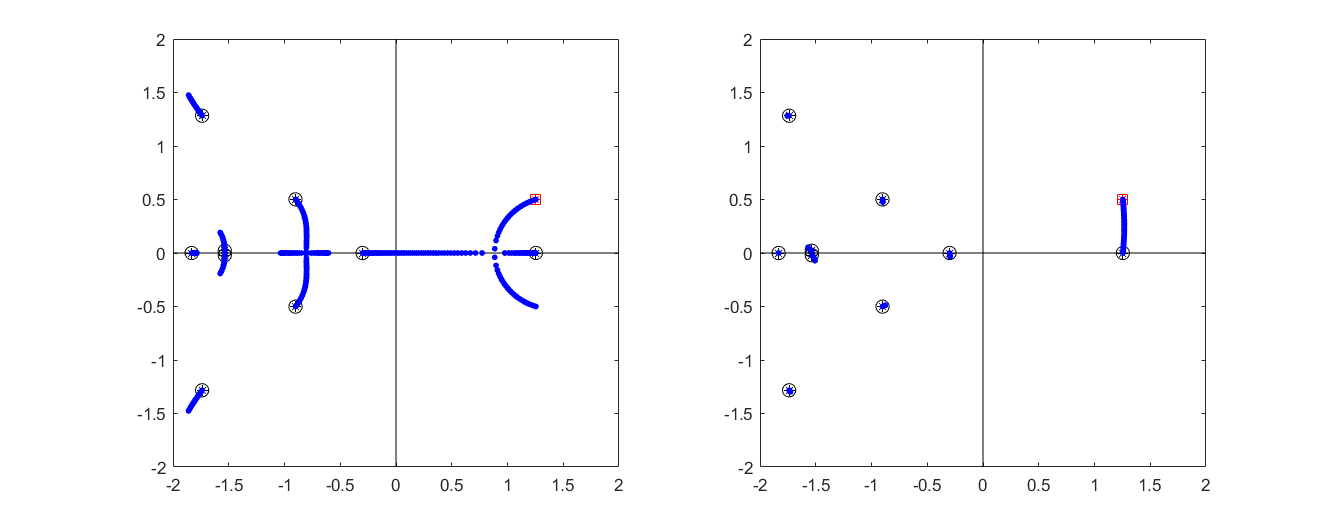}
\caption{Eigenvalue perturbation curves for the Hermitian rational matrix polynomial with respect to structured (left) and unstructured (right) perturbations.}
\label{figure:herm}
\end{figure}

Table~\ref{tab:my_label} compares the structured and unstructured eigenvalue backward errors for different values of $\lambda$. The structured eigenvalue backward error is significantly larger than the unstructured one. As $\lambda$ values approach the eigenvalue $1.2546$ of $G(z)$, as expected, (i) the unstructured backward error tends towards zero, and (ii) the structured backward error also decreases but not as rapidly as the unstructured one.

\begin{table}[]
    \centering
    \begin{tabular}{|c|c|c|}\hline 
        $\lambda$-values & \textbf{$\eta(G,\lambda)$} & \textbf{$\eta^{{\rm Herm}}(G,\lambda)$}\\ 
        &Theorem~\ref{thm:unstrbacerrfor}& Theorem~\ref{thm:Herm} \\ \hline 
        1.2546+0.50i & 0.06405 & 0.28878\\ \hline
        1.2546+0.35i & 0.04525 & 0.28177\\ \hline
        1.2546+0.20i & 0.02601 & 0.27718\\ \hline
        1.2546+0.05i & 0.00652 & 0.27507\\ \hline
        1.2546+0.015i& 0.00196 & 0.27494\\ \hline
        1.2546+0.0015i&0.00019 & 0.27493\\ \hline            
    \end{tabular}
    \caption{Structured and unstructured eigenvalue backward errors of a random Hermitian RMP.}
    \label{tab:my_label}
\end{table}}
\end{example}

\begin{example}{\rm($*$-palindromic RMP)~
Let $P(z)=A+zA^*+\frac{z^2}{z^3+1}E$ be a $3\times 3$ $*$-palindromic RMP, where $A$ and $E$ are randomly generated matrices such that $E^*=E$. 
We consider the point $\lambda=1.0640+0.7033i$ which is close to an eigenvalue $0.8641 + 0.5034i$ of $P(z)$. For this, the computed unstructured (Theorem~\ref{thm:unstrbacerrfor}) and structured (Theorem~\ref{thm:mainpalin}) eigenvalue backward errors are respectively given by $\eta(P,\lambda)=0.02013$ and $\eta^{{\rm pal}_*}(P,\lambda)=0.0625$.

 We experiment similarly to Example~\ref{ex:herm} for $*$-palindromic RMP as well. Note that the eigenvalues of a $*$-palindromic RMP are symmetric to the unit circle. The plot on the left of Figure~\ref{figure:pal} illustrates the movement of the eigenvalues of $P(z)$ under the homotopic perturbations $P(z)+t \Delta P(z)$ as $t$ varies from $0$ to $1$, where $\Delta P(z)$ is an optimal $*$-palindromic perturbation such that $\nrm{(\Delta A_0,\Delta A_1,\Delta E_1)}=0.0625$. The eigenvalues of $P(z)$ are marked with stars surrounded  by circles. To make $\lambda$ (marked with a star surrounded by a square) as an eigenvalue of the perturbed polynomial $P(z)+\Delta P(z)$, the eigenvalue curves originated from the eigenvalues $0.8641+0.5035i$ and $0.9994+0.0343i$ come together on the unit circle and split out to form the pair of eigenvalues  $(\lambda,1/\lambda)$.

On the other hand, the plot on the right of Figure~\ref{figure:pal} illustrates the movement of the eigenvalues under homotopic arbitrary perturbations $P(z)+t\widetilde{\Delta P}(z)$ where $t$ varies from $0$ to $1$, where $\Delta P$ is the optimal unstructured perturbation such that $\nrm{\Delta P}=0.0201$. In this case perturbations do not preserve the palindromic structure of $P(z)$; hence the nearest eigenvalue of $P(z)$ moves to $\lambda$.

\begin{figure}[H]
\centering
\includegraphics[width=\textwidth]{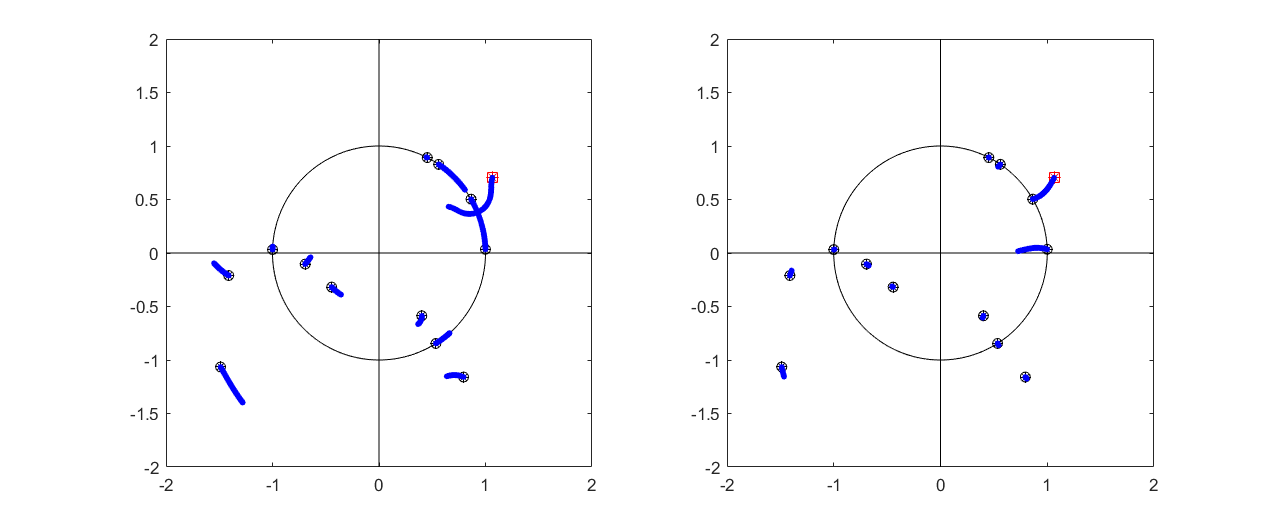}
\caption{Eigenvalue perturbation curves for the $*$-palindromic RMP with respect to structured (left) and unstructured (right) perturbations.}
\label{figure:pal}
\end{figure}
}
\end{example}

We close the section with the following remark.
\begin{remark}{\rm
We note that our framework extends to the general nonlinear eigenvalue problem of the form 
\[
\mathcal T(z)v=0,~v \in \C^n\setminus \{0\},\quad \text{where}\quad 
\mathcal T(z)=f_1(z)\mathcal T_1+\cdots+f_k(z)\mathcal T_k,
\]
the functions $f_1(z),\ldots,f_k(z)$ are meromorphic and assumed to be the given data, and the matrix tuple $(\mathcal T_1,\ldots,\mathcal T_k)$ belong to a structured class $\mathbb S \subseteq (\C^{n,n})^k$.
}
\end{remark}

\section{Conclusion}

We have extended the framework suggested in~\cite{MR3194659} to compute the 
 structured eigenvalue backward errors of RMPs $G(z)$ of the form~\eqref{rmatrix} for the structures mentioned in Table~\ref{tab:my_label}. Numerical experiments suggest a significant difference between structured and unstructured eigenvalue backward errors. 
 As far as we know, no other work has been proposed in the literature to compute structured eigenvalue backward error of RMPs. 
Promising way of future research would be to compute the eigenvalue backward errors under general perturbations in $G(z)$, i.e., when the scalar functions $w_j$s in~\eqref{eq:assumpert} are also subject to perturbation.

\section*{References}
\bibliographystyle{siam}
\bibliography{REP.bib}

\appendix
\section{Proof of Theorem~\ref{thm:unstrbacerrfor}}\label{app:sec1}

\begin{proof}
For a fixed $(\lambda, x) \in \C \times \C^{n}\setminus \{0\}$, if we define the eigenpair backward error of $G(z)$ as
\begin{eqnarray}\label{def:errorep}
            \eta(G,\lambda,x):=\inf\Big\{&\hspace{-.3cm} \nrm{(\Delta A_{0}, \ldots, \Delta A_{d},\Delta E_1, \ldots, \Delta E_k)} \; :\;\Delta A_{0}, \ldots, \Delta A_{d},\Delta E_1, \ldots, \Delta E_k\in \C^{n,n},\nonumber\\
             & \big(\sum_{p=0}^d \lambda^i(A_p-\Delta A_p)+\sum_{j=1}^kw_j(\lambda)(E_j-\Delta E_j)\big)x=0 \Big\},
\end{eqnarray}
then we have
\begin{equation}\label{eq:epairerrorform}
\eta(G, \lambda, x):=\frac{\|(G(\lambda) x \|}{\|x\| \left\|\left(1, \lambda, \ldots,\lambda^d,w_1(\lambda),\ldots w_k(\lambda)\right)\right\|} .
\end{equation}
Indeed, if $\Delta {A_0}, \ldots, \Delta {E_k} \in \mathbb{C}^{n \times n}$ are perturbation matrices such that
\[
\Delta G(\lambda) x:=\sum_{p=0}^{d} \lambda^{p} \Delta {A_p}x+\sum_{j=1}^k w_j(\lambda) \Delta {E_j} x = G(\lambda) x,
\]
that is, $(\lambda, x)$ is an eigenpair of $\sum_{p=0}^{d} z^{p}\left(A_{p}-\Delta A_{p}\right) + \sum_{j=1}^k w_i(z)\left(E_j - \Delta E_j\right)$, then
\begin{eqnarray*}
\|G(\lambda) x\| =\|\Delta G(\lambda) x\| &\leq& \left\|\sum_{p=0}^{d} \lambda^{p} \Delta A_{p} + \sum_{j=1}^k w_j(\lambda)  \Delta E_j\right\| \cdot\|x\|  \nonumber\\
& \leq &\left\|\left(1, \lambda, \ldots,\lambda^d,w_1(\lambda),\ldots w_k(\lambda)\right)\right\| \nrm{\left(\Delta A_{0}, \ldots, \Delta E_{k}\right)}\|x\|,
\end{eqnarray*}
where the last identity follows due to Cauchy-Schwarz inequality. This implies the $``\geq"$ in~\eqref{eq:epairerrorform}. On the other hand, setting
\begin{equation}\label{matrix:unst}
\Delta A_{p}:=\frac{\overline{\lambda}^{p} G(\lambda) x x^{*}}{x^{*} x\left\|\left(1, \lambda, \ldots,\lambda^d,w_1(\lambda),\ldots w_k(\lambda)\right)\right\|^{2}}, \quad \Delta E_{j}:=\frac{\overline{w_j(\lambda)} G(\lambda) x x^{*}}{x^{*} x\left\|\left(1, \lambda, \ldots,\lambda^d,w_1(\lambda),\ldots w_k(\lambda)\right)\right\|^{2}}
\end{equation}
for $p=0,\ldots,d$ and $j=1,\ldots,k$, we have $\nrm{\left(\Delta A_{0}, \ldots, \Delta E_{k}\right)}=\frac{\|(G(\lambda) x \|}{\|x\| \left\|\left(1, \lambda, \ldots,\lambda^d,w_1(\lambda),\ldots w_k(\lambda)\right)\right\|} $ and
\begin{equation}
\Delta G(\lambda) x=\left(\sum_{p=0}^{d} \lambda^{p} \Delta A_{p} + \sum_{j=1}^k w_j(\lambda) \Delta E_j\right)x=G(\lambda) x.
\end{equation}
This proves equality~\eqref{eq:epairerrorform}. Now the proof follows from~~\eqref{eq:epairerrorform} by using the fact that 
\[
\eta(G,\lambda)=\inf_{x\in \C^{n}\setminus \{0\}}\eta(G,\lambda,x)=\frac{\sigma_{\min} (G(\lambda))}{\|(1,\lambda,\ldots,\lambda^d,w_1(\lambda),\ldots,w_k(\lambda))\|}.
\]
\end{proof}

\end{document}